\crefname{subsection}{Subsection}{Subsections}
\newcommand{\papertitle}{Adaptive Consensus: A network pruning approach for decentralized optimization}
\newcommand{\paperauthora}{Albert S. Berahas}
\newcommand{\paperauthoraaffiliation}{Department of Industrial and Operations Engineering, University of Michigan}
\newcommand{\paperauthorb}{Raghu Bollapragada}
\newcommand{\paperauthorc}{Suhail M. Shah}
\newcommand{\paperauthorcaffiliation}{Operations Research and Industrial Engineering Program, University of Texas at Austin}
\begin{document}
\title{\papertitle}

\author{\paperauthorc\footnotemark[1]
   \and \paperauthora\footnotemark[2]
   \and \paperauthorb\footnotemark[1]}

\maketitle

\renewcommand{\thefootnote}{\fnsymbol{footnote}}

\footnotetext[1]{\paperauthorcaffiliation. (\url{\paperauthorcemail},\url{\paperauthorbemail})}
\footnotetext[2]{\paperauthoraaffiliation. (\url{\paperauthoraemail})}
\renewcommand{\thefootnote}{\arabic{footnote}}

\begin{abstract}{
We consider network-based decentralized optimization problems, where each node in the network possesses a local function and the objective is to collectively attain a consensus solution that minimizes the sum of all the local functions. A major challenge in decentralized optimization is the reliance on communication which remains a considerable bottleneck in many applications. To address this challenge, we propose an adaptive randomized communication-efficient algorithmic framework that reduces the volume of communication by periodically 
tracking 
the disagreement error and judiciously selecting the most influential and effective edges at each node for communication. Within this framework, we present two algorithms: Adaptive Consensus (\texttt{AC}) to solve the consensus problem and Adaptive Consensus based Gradient Tracking (\texttt{AC-GT}) to solve smooth strongly convex decentralized optimization problems. We establish strong theoretical convergence guarantees for the proposed algorithms and quantify their performance in terms of various algorithmic parameters under standard assumptions. Finally, numerical experiments showcase the effectiveness of the framework in significantly reducing the information exchange required to achieve a consensus solution.

}
\end{abstract}


\section{Introduction}
The problem of network-based decentralized optimization can be formally stated as,
\begin{equation}\label{mainprob}
\begin{aligned}
    \min_{x_i \in \mathbb{R}^d} \quad & \frac{1}{n} \sum_{i=1}^n f_i(x_i)\\
    \text{s.t.} \quad & x_i =x_j,\,\forall\, i,j \in [n]:=\{1,2,\cdots,n\},
\end{aligned}
\end{equation}
where $f_i(\cdot) : \mathbb{R}^d \rightarrow \mathbb{R}$ is a component of the objective function located at node $i \in [n]$, 
and $x_i \in \mathbb{R}^d$ is a copy of the optimization variable at node $i \in [n]$. A closely related yet simplified version of this problem, whose goal is to reach consensus among the nodes, i.e., $x_i=x_j$ for all $i\in [n]$, without minimizing an objective function, is referred to as the consensus problem \cite{rensurvey}. 
Problems of these types arise in several applications including wireless sensor networks \cite{pc3,pc4}, power systems design \cite{pc5,decent_pow}, parallel computing \cite{pc1,pc2}, and robotics \cite{pc6,rob2}. More recently, decentralized optimization has experienced renewed interest owing to the abundance of decentralized data and privacy-preserving machine learning \cite{pc7,pc8}, where $f_i$ is a function of the data held by node $i\in[n]$. Several classes of decentralized optimization algorithms have been proposed to solve \eqref{mainprob}, where the main components consist of local computations at every node and information exchange (communication) between nodes in order to achieve consensus~\cite{pc1}. The communication requirement in many applications remains a major bottleneck in the performance of decentralized optimization methods \cite{tsit1,survey,do1,com2,com3,com4}.

In this work, we propose and develop a novel approach to reduce the communication requirements in decentralized optimization without significantly impacting the convergence properties of the underlying algorithm. The core principle of our approach involves judiciously selecting a subset of the edges of the network (instead of all the edges) along which communication is performed at each iteration, thereby reducing the communication efforts. A key observation motivating this approach is that selectively pruning the edges of the network has marginal impact on the spectral properties of the mixing matrix associated with any graph topology. This matrix plays a crucial role in determining the rate of information diffusion through the network~\cite{tsit1}, which subsequently affects the rate of achieving consensus amongst nodes. In fact, for many network structures, the spectral properties remain virtually unchanged even after selectively pruning up to 50-60\% of the edges (see Section 4.1),  thus retaining a 
consensus rate akin to that of an unpruned network while reducing the communication volume. 


However, to fully leverage the potential of such pruning approaches, one requires information about the most influential edges, i.e., the edges that achieve consensus with minimal communication cost, information that is typically unknown. 
For example, 
the bridge edge that connects two fully connected components in a barbell graph \cite[Figure 2]{barbell} has 
a significantly more influential role in the consensus process than other edges. Therefore, it is beneficial to communicate along the bridge edge as compared to other edges. Unfortunately, due to the decentralized nature of the network, nodes cannot a priori 
determine these influential 
edges. Moreover, the relative influence of different edges in achieving consensus can vary significantly depending on the network state and structure, and the application. 
To overcome this challenge, our work proposes a cyclic adaptive randomized procedure that can be implemented in a decentralized manner to identify 
such edges and reduce the communication costs. Specifically, we periodically track the \emph{disagreement error} along edges during the consensus process to estimate the relative importance of edges in achieving consensus and maintain a network with only 
the most influential edges.

\subsection{Contributions}
A concise summary of the contributions  
is as follows:
\begin{itemize}[leftmargin=0.75cm]
    \item We propose an adaptive communication-efficient algorithmic framework. Within this framework, we introduce two new algorithms: Adaptive Consensus (\texttt{AC}) to solve the consensus problem and Adaptive Consensus based Gradient Tracking (\texttt{AC-GT}) to solve the decentralized optimization problem\footnote{For better exposition of the consensus framework, the consensus and decentralized optimization problems are treated separately even though the former is a simplified version of the latter.}. 
    The novelty in our approach lies in the ability to exploit the underlying structure of the network to reduce the volume of communication. This is accomplished via an adaptive consensus scheme that selects the most influential and effective edges for communication at each node based on the graph topology. The proposed framework has broad applicability and can be integrated with other existing decentralized optimization algorithms or adapted to other settings including directed graphs, time-varying topologies, and asynchronous updates.

    \item We provide theoretical convergence guarantees for smooth strongly convex problems for both \texttt{AC} and \texttt{AC-GT}, demonstrating that they retain the linear convergence properties of their base counterparts, i.e., methods that do not utilize the adaptive consensus framework, while requiring reduced communication. 
    The analysis  utilizes the inhomogeneous matrix product theory to prove linear convergence by showing that the pruned matrix products remain contractive.  In contrast to prevalent analytical approaches in decentralized optimization with time-varying graphs, the rate constant in our results is obtained using the coefficient of ergodicity which effectively highlights the dependence of the convergence rate on the network pruning procedure parameters.

    \item We illustrate the empirical performance of \texttt{AC} in solving the standard consensus problem and of  \texttt{AC-GT} in solving linear regression and binary classification logistic regression problems. Our numerical results highlight 
    that the proposed methods achieve significant communication savings while maintaining solution quality, compared to the contemporary state-of-the-art techniques.
\end{itemize}

\subsection{Literature Review}
The proposed idea of exploiting the relative significance of edges to improve algorithmic efficiency 
is not exclusive to 
decentralized optimization 
and has been studied in other fields that use graphical modeling on networks \cite{braess,braess1,lottery1,lottery2}. 
In the context of traffic modeling, a converse analogue falls under the category of ``Braess's paradox'', which suggests that adding one or more roads to a road network can actually slow down the overall traffic flow \cite{braess,braess1}. Another example, although somewhat tangential, is found in neural networks where the ``lottery ticket hypothesis'' states that within dense, feed-forward networks, there are smaller pruned sub-networks that, when trained in isolation, can achieve test accuracy comparable to the original network in a similar number of iterations \cite{lottery1,lottery2}.

Within decentralized optimization, several recent works have proposed communication-efficient algorithms that balance the communication and computation costs to achieve overall efficiency \cite{bcc1,bcc2,bcc3,bcc4,bcc5,albert1,diffusion}. Our proposed approaches are complementary to and can be integrated with these existing works. Furthermore, the proposed framework (adaptive consensus) adds to the list of techniques that reduce the communication costs. One such approach is gossip communication protocols where nodes selectively communicate with neighbors asynchronously~\cite{randomgossip,max1,max2,max3}. It is worth noting that in gossip protocols a convex optimization problem is often solved to optimize the spectral gap of the expected consensus matrix~\cite{randomgossip}. 
Another class of approaches leverage quantized communication where only quantized (reduced size) information is communicated to reduce the communication costs. However, these techniques typically lack convergence guarantees to the solution \cite{pc1, suh_rag}. Moreover, quantization techniques can also be incorporated into our framework to further reduce the communication overhead. 
We emphasize that our approach differs significantly from the aforementioned approaches in several ways including the focus on enhancing communication efficiency by adaptively modifying the graph structure in a decentralized manner, and achieving convergence guarantees to the solution.

While several classes of algorithms have been proposed for solving 
decentralized optimization, gradient tracking methods have emerged as popular alternatives due to their simplicity, optimal theoretical convergence properties and empirical performance \cite{EXTRA,NEXT,DIG,aug_dgm,sgt1,albert1}. 
We incorporate the proposed communication-efficient technique into the gradient tracking algorithmic framework with the goal of reducing the communication costs while retaining optimal convergence guarantees. Furthermore, we note that the setting of time-varying graphs, which also arises in our work, has been explored previously in \cite{ev2, alned, assran, DIG}, among others. 

\subsection{Organization}
The paper is organized as follows. In the remainder of this section, we define the notation employed in the paper. 
In Section~\ref{sec.ada_con}, we describe the network model, introduce the Adaptive Consensus (\texttt{AC}) algorithm, and establish convergence guarantees under standard assumptions. Building upon the adaptive consensus procedure and gradient tracking algorithms, we propose the Adaptive Consensus based Gradient Tracking (\texttt{AC-GT}) algorithm and study its convergence properties in Section~\ref{sec.acgt}. Section~\ref{sec.num_red} presents numerical results that illustrate the performance of the proposed algorithms. Finally, concluding remarks are provided in Section~\ref{sec.conc}.

\subsection{Notation} 
We use $\mathbb{R}$ to denote the set of real numbers and $\mathbb{N}$ to denote the set of all strictly positive integers. The $\ell_2$-inner product between two vectors is denoted by 
$\langle \cdot,\cdot \rangle$ and 
$\otimes$ denotes the Kronecker product between two matrices. All norms, unless otherwise specified, can be assumed to be $\ell_2 $-norms of a vector or matrix depending on the argument. 
Let $\lfloor x \rfloor$ ($\lceil x \rceil$) denote the nearest integer less (greater) than or equal to $x$. 
We use $a|b$ to denote integer division between any two $a,b \in \mathbb{N}$, i.e., $a|b = \lfloor a/b \rfloor$. We use $\textbf{1}_{n}:=\frac{1}{n}1_n\otimes I_d \in \mathbb{R}^{nd\times d}$, where $1_n\in \mathbb{R}^n$ is the column vector of all ones and $I_d \in \mathbb{R}^{d\times d}$ is the $d\times d$ identity matrix. For any matrix $Q$ with eigenvalues $-1< \lambda_n\leq \cdots \leq\lambda_2<\lambda_1=1$, the \textit{spectral gap} 
is defined as $
\sigma(Q):= 1 -  \max\{|\lambda_n|,|\lambda_2|\}.$ 
The  set $A\setminus B$  consists of the elements of $A$ which are not elements of $B$. We use $x^*$ denotes the optimal solution of \eqref{mainprob}. 
We use the column vector $x_{i,k} \in \mathbb{R}^d$ to denote the value of the objective variable held by node $i$ at iteration $k$. The vector $\x_k \in \mathbb{R}^{nd}$ denotes the column-stacked version of $x_{i,k}$ and  $\n \textbf{f}(\x_k)$ denotes the column-stacked gradients, i.e., 
\begin{equation*}
    \x_k:= [x_{1,k},\cdots, x_{n,k}] \in \mathbb{R}^{nd} \quad \text{and} \quad \n \textbf{f}(\x_k) := \big[\n f_1 (x_{1,k}), \cdots\, \n f_n(x_{n,k})] \in \mathbb{R}^{nd} ,
\end{equation*}
where $\n f_i:\mathbb{R}^d \to \mathbb{R}^d$ is the gradient of the local function $f_i$. The following quantities are used in the presentation and analysis of the algorithms,  
\begin{equation*}
    \bar{x}_k := \frac{1}{n} \sum_{i=1}^n x_{i,k}
    \in \mathbb{R}^d,\,\,\, \bar{\x}_k = [\bar{x}_k,\cdots,\bar{x}_k]\in \mathbb{R}^{nd},\,\,\,\nabla f(\bar{x}_k):= \frac{1}{n}\sum_{i=1}^n \nabla f_i(\bar{x}_k)\in \mathbb{R}^d.
\end{equation*}

\section{Adaptive Consensus}\label{sec.ada_con}
This section provides a 
description of the pruning protocol which serves as the basic building block for the proposed consensus scheme referred to as the Adaptive Consensus algorithm (Algorithm~\ref{alg.ada_cons}, \texttt{ADAPTIVE CONSENSUS (AC)}). We 
describe the network model we assume in the paper, 
discuss the pruning protocol, and 
present the algorithm and its associated convergence guarantees.

\subsection{Network Model}

The underlying network is assumed to be modeled by a undirected graph $\mathcal{G} =\{\mathcal{V},\mathcal{E}\}$, where $\mathcal{V}$ is the set of nodes and $\mathcal{E}$ is the set of edges. 
We use the matrix $Q = [q_{ij}]_{i\in[n],j\in[n]}$ to denote the mixing matrix. The mixing matrix has the following properties: 
the entry $q_{ij}>0$ (assumed to be equal to $q_{ji}$) if there is a link between any two nodes $i,\,j\in \mathcal{V}$. We use $\E_i$ to denote the set of all edges $(i,j)$ such that $j \in \mathcal{V}$ is a neighbor of $i \in \mathcal{V}$, i.e., the set of all $j\in \mathcal{V}$ with $j\neq i$ for which $q_{ij}>0$. Note that the neighbors of $i$ for any $i \in [n]$ is the set of all $j$ such that $(i,j) \in \E_i$. Since we assume that the graph is undirected, 
$(i,j) \in \E_i$ if and only if 
$(j,i) \in \E_j$. We make the following assumption on the network.

\begin{assumption}[Graph Connectivity]\label{asmp1} $\mathcal{G} =\{\mathcal{V},\mathcal{E}\}$ is static and connected.\end{assumption}

\subsection{Pruning Protocol}

The main goal of the pruning protocol is to provide a systematic approach for selecting 
the 
(subset of) edges within a graph along which to communicate in order to achieve consensus with reduced communication 
efforts. To be more precise, given the reference graph $\G(\V,\E)$ and a set of node estimates $a_i$ for all $i \in [n]$, the pruning protocol generates a modified graph $\G(\V,\bar{\E})$ by selectively removing edges from the reference graph. The edges to be pruned are determined by a function of the node estimates. The function assigns a probability to each edge in $\E$ based on its likelihood of being least effective and influential with respect to 
achieving consensus. 
The pseudo-code for the pruning protocol is given in 
Algorithm~\ref{alg.prun_prot}. 

\begin{algorithm}[]
\footnotesize
   \caption{\small  \texttt{PRUNING PROTOCOL}($\mathcal{G}(\mathcal{V},\mathcal{E}),a_i, 
   (\bar{\kappa}_i,\ubar{\kappa}_i),\beta)$.\label{alg.prun_prot}}
    \footnotesize
     \textbf{Inputs:} Graph $\mathcal{G}(\mathcal{V},\mathcal{E})$; Node estimates $a_i$ for all $i \in [n]$; 
     Softmax parameter $\beta \in [0,\infty]$; Thresholding factors $(\bar{\kappa}_i,\ubar{\kappa}_i) \in [0, 1]^2$ for all $i \in [n]$.
  \begin{algorithmic}[1]
   \footnotesize
          \STATE Set $\E^{\text{prune}}_i := \{\}$ for all $i \in [n]$.
       \FORALL{$i\in [n]$ in parallel}
       \STATE Receive estimates $a_j$ from all neighbors $j$.
       \STATE Compute a dissimilarity measure 
       $\Delta(a_i,a_j)$ for all edges $(i,j) \in \E_i$.
        \WHILE{$ |\E^{\text{prune}}_i| \leq \lfloor\bar{\kappa}_i \times |\mathcal{E}_i|\rfloor$}
            \STATE Draw a sample edge $(i,j')$ from $\E_i \setminus \E^{\text{prune}}_i $ according to:
            $$
            p_{i,j} \sim \tfrac{\exp(-\beta \Delta(a_i,a_j))}{\sum_{ (i,j') \in \E_i \setminus \E^{\text{prune}}_i }\exp(-\beta \Delta(a_i,a_{j'}))}, \qquad ((i,j) \in \E_i  \setminus \E^{\text{prune}}_i).
            $$
            \STATE Update set $\E^{\text{prune}}_i \to \E^{\text{prune}}_i \cup (i,j')$ for all $i \in [n]$.
       \ENDWHILE
       \ENDFOR \vspace{0.1cm}
       \STATE Set $ \bar{\E}_i:= \E_i,$ 
       for all $i \in [n]$.
       \FORALL{all $i\in [n]$}
            \STATE Send requests to all neighbors $j$ such that $(i,j) \in \E_i^{\text{prune}}$ to prune edge $(j,i) \in \E_j$.
            \STATE Receive request from all {neighbors} $j'$ such that $(j',i) \in \E_{j'}^{\text{prune}}$ to prune edge $(i,j') \in \E_i$. 
            \FORALL {$(i,j')$ such that $(i,j') \in \E^{\text{prune}}_{i} $}
            \STATE Remove edge
 $(i,j')$ from $\bar{\E}_i$.
             \ENDFOR
             \FORALL {requests  $(i,j')$ such that $(i,j') \notin \E^{\text{prune}}_{i} $}
             \IF { $|\bar{\E}_{i}|>\lceil  \ubar{\kappa}_i |\mathcal{E}_{i}|\rceil $} 
             \STATE Remove edge $(i,j')$ from $\bar{\E}_i$.
             \ENDIF
            \ENDFOR
       \ENDFOR \vspace{0.1cm}
\IF{\textit{Graph}$=$\textit{`Undirected'}}
 \FORALL{ 
 $(i,j)\in \bar{\E}_i$ and   $(j,i)\notin \bar{\E}_j$}
 \STATE Update set $\bar{\E}_j \to \bar{\E}_j \cup (j,i)$. 
 \ENDFOR
 \ENDIF
  \end{algorithmic}
  \textbf{Output:} $\G(\V,\bar{\EB}) $, where $\bar{\EB}:= \cup_{i=1}^n \bar{\E}_i$.
\end{algorithm}

Algorithm~\ref{alg.prun_prot} has three free (user-defined) parameters ($\bar{\kappa}_i,\,\ubar{\kappa}_i $ and $\beta$). 
Broadly speaking, $\bar{\kappa}_i \in [0,1]$ represents the fraction of edges to be pruned 
at node $i \in [n]$ and $\ubar{\kappa}_i \in [0,1]$ is a lower bound on the minimum number of edges retained at node $i$. The parameter $\beta \in [0,\infty]$ determines the level of influence of the dissimilarity measure in assigning the pruning probabilities. The role and significance of these parameters becomes evident by 
examining the main steps of the protocol, which we discuss next.

\paragraph{Selecting Candidate Edges for Pruning}

To select 
the edges to be pruned, each node $i \in [n]$ constructs a set $\E_i^{\text{prune}}$ by iteratively drawing a sample edge from the set $\E_i \setminus \E^{\text{prune}}_i$, $\lfloor \bar{\kappa}_i\times  |\E_i| \rfloor$ times, where 
$\bar{\kappa}_i$ represents the fraction of the total number of edges to be removed at node $i$ during pruning. 
The probability of selecting an edge $(i,j)$ is determined by the softmax of a dissimilarity measure (denoted by $\Delta(a_i,a_j)$) between  the estimates at $i$ and $j$. A possible candidate for $\Delta(a_i,a_j)$ is the $\ell_1$-norm difference between $a_i$ and $a_j$, i.e., $\|a_i-a_j\|_1$. For large values of the parameter $\beta$ (the argument of the softmax) edges exhibiting small dissimilarity (small $\Delta(a_i,a_j)$), where $a_i$ and $a_j$ are in similar, have an increased likelihood of being pruned.

More formally, for the $k$th draw at node $i \in [n]$, where $1\leq k\leq \lfloor \bar{\kappa}_i  |\E_i| \rfloor$, the probability distribution over the set of edges $(i,j) \in \E_i \setminus \E^{\text{prune}}_i$ is given by 
\begin{align*}
            p_{i,j} \sim \tfrac{\exp(-\beta \Delta(a_i,a_j))}{\sum_{ (i,j') \in \E_i/\E^{\text{prune}}_i } \exp(-\beta \Delta(a_i,a_{j'}))}, \qquad \text{for all} \;\; (i,j) \in \E_i \setminus \E^{\text{prune}}_i,    
\end{align*}
where $\beta \in [0,\infty]$ is the softmax parameter that controls the influence of the dissimilarity measure. Note that $\beta =\infty$ represents the greedy case, where each node $i \in [n]$ selects the top $\lfloor \bar{\kappa}_i  |\E_i| \rfloor$ edges with least dissimilarity measure. At the other extreme, $\beta=0$ represents the case of random pruning independent of the dissimilarity measure. 

\paragraph{Pruning Mechanism}
To perform the actual pruning, each node $i \in [n]$ sends a request to neighboring nodes $j$, where $(i,j) \in \E^{\text{prune}}_i$, to prune edge $(j,i)$. 
At the same time, node $i \in [n]$ receives and catalogues the requests from all its neighboring nodes $j'$ with $(j',i ) \in \E_{j'}^{\text{prune}}$ to prune edges $(i,j')$. 
It is worth noting that the request for $(i,j')$ does not necessarily require $(i,j')$ to be in $\E_i^{\text{prune}}$. Initially, each node creates a copy $\bar{\E}_i$ of the original set of edges 
$\E_i$. The following steps are then performed in order by each node:
\begin{enumerate}[leftmargin=0.75cm]
    \item[\emph{(i)}] For each $(i,j') $ such that $(i,j') \in \E_i^{\text{prune}}$, edge $(i,j')$ is removed from $\bar{\E}_i$. This covers the ideal case where both nodes $i$ and $j'$ want to remove the edge $(i,j')$ and $(j',i)$ from their respective edge sets $\E_i$ and $\E_{j'}$.
    \item[\emph{(ii)}] If $(i,j') \notin \E^{\text{prune}}_i$, then the edge is pruned if $|\bar{\E}_{i}|> \lceil  \ubar{\kappa}_i |\mathcal{E}_{i}|\rceil $. So, node $i \in [n]$ prunes an edge not included in $\E_i^{\text{prune}}$ only if the number of edges remaining in $\bar{\E}_i$ \textit{is greater than a certain fraction} $\ubar{\kappa}_i$ of $|\mathcal{E}_{i}|$. An implicit assumption here is that $\ubar{\kappa}_i \leq 1-\bar{\kappa}_i$ so that $\lceil  \ubar{\kappa}_i |\mathcal{E}_{i}|\rceil  \leq \lceil  (1-\bar{\kappa}_i) |\mathcal{E}_{i}|\rceil  $. It should be noted that for the algorithm to be well-defined, pruning requests of this type are processed in the order in which they are received. 
\end{enumerate}
The output of Algorithm~\ref{alg.prun_prot} is $\G(\V,\bar{\EB})$, where $\bar{\EB}:=\cup_i \bar{\E}_i$. An important point worth noting here is that the resulting set $\bar{\E}_i$ for $i \in [n]$ may contain edges $(i,j)$ for which $(j,i)\notin \bar{\E}_j$. To make the pruned graph undirected, there are two possible approaches; either node $j$ adds $(j,i)$ to $\bar{\E}_j$, or alternatively, node $i$ removes $(i,j)$ from $\bar{\E}_i$. These approaches can be implemented by performing one additional round of communication among the nodes with negligible overhead.

\subsection{Adaptive Consensus}
Building upon the pruning protocol presented in the previous subsection, we introduce an algorithm to solve the consensus problem \cite[Section 1]{alex_tsit}, which requires the convergence of all the node estimates to the average of their initial estimates. The pseudo-code is provided in Algorithm~\ref{alg.ada_cons}.

\begin{algorithm}[]
\footnotesize
   \caption{\small \texttt{ADAPTIVE CONSENSUS (AC)}\label{alg.ada_cons}}
    \footnotesize
    \textbf{Inputs:} Graph $\mathcal{G}(\mathcal{V},\mathcal{E})$; Cycle length $\tau \in \mathbb{N}$; Softmax parameter $\beta \in [0,\infty]$; Thresholding factors $(\bar{\kappa}_i,\ubar{\kappa}_i) \in [0, 1]^2$ for all $i \in [n]$; 
    Initial estimates $x_{i,0} \in \mathbb{R}^d$ for all $i\in [n]$; Total number of iterations $T\in \mathbb{N}$.
  \begin{algorithmic}[1]
   \footnotesize
    \FOR{$k=0,\dots,T$}
     \FOR{ all $i\in [n]$ in parallel} 
          \IF{$k \in \mathcal{I},$}
          \STATE Generate $\mathcal{G}(\mathcal{V},\bar{\EB}_{k|\tau}) \sim$ \texttt{PRUNING PROTOCOL}($\mathcal{G}(\mathcal{V},\mathcal{E}),x_{i,k},(\bar{\kappa}_i,\ubar{\kappa}_i),\beta)$.
           \STATE Get new weights $\bar{q}_{ij}[k|\tau] \sim $ \texttt{GENERATE WEIGHTS} ($\mathcal{G}(\mathcal{V},\bar{\EB}_{k|\tau})$).
          \ENDIF
         \STATE Update estimate at node $i$ according to:
          $
          x_{i,k+1} = \sum_{j=1}^n \bar{q}_{ij}[k|\tau]x_{j,k}.
          $
       \ENDFOR  
    \ENDFOR
  \end{algorithmic}
  \textbf{Output:} $x_{i,T}$ for all $i \in [n]$.
\end{algorithm}
We discuss the main steps of the algorithm and how to select the parameters $\bar{\kappa}_i$ and $\ubar{\kappa}_i$. Algorithm~\ref{alg.ada_cons} has a cyclic structure with cycle length 
$\tau\in \mathbb{N}$. The set of indices where the pruning protocol is executed is denoted by $\mathcal{I}$. For any $k \in \mathcal{I}$, the iterations $t\in [k,k+\tau)$ are said to constitute a \textit{consensus cycle}.

\paragraph{Pruning Step} 
At the start of the $k|\tau$ consensus cycle, the pruning protocol is executed to obtain the pruned graph $\G(\V,\bar{\EB}_{k|\tau})$, where $\bar{\EB}_{k|\tau}:= \cup_i \bar{\E}_{i,k|\tau}$, using the current local estimates 
$x_{i,k}$ for all $i\in [n]$. Subsequently, the mixing matrix, denoted by $Q_{k|\tau}  := [q_{ij}[k|\tau]]_{i\in [n],j\in [n]} $, of the pruned graph $\G(\V,\bar{\EB}_{k|\tau})$ is constructed in a decentralized manner. As an example, 
we can consider the Metropolis-Hastings scheme \cite{DIG}, which generates the weights via the following prescribed rule: 
\begin{equation}\label{MH}
  q_{ij}[k|\tau] :=
    \begin{cases}
      \tfrac{1}{\left(1+\max\{ |\bar{\E}_{i,{k|\tau}}|,|\bar{\E}_{{j,k|\tau}}|\}\right)} &  \text{if } (i,j) \in \bar{\EB}_{k|\tau}\\
      1-\sum_{p=1}^n\bar{q}_{ip}[k|\tau] & \text{if $i=j$}\\
      0 & \text{otherwise,}
    \end{cases}       
\end{equation}
where $\bar{\E}_{i,{k|\tau}}$ denotes the (pruned) edge set at node $i \in [n]$.

\paragraph{Pruned Graph based Averaging} 
For all iterations $t \in [k,k+\tau)$ with $k \in \mathcal{I}$, the algorithm performs decentralized averaging using the pruned weights, $\bar{q}_{ij}[k|\tau]$. Subsequent to this, the pruning step (Line 4, Algorithm~\ref{alg.ada_cons}) is performed again with the updated node estimates.

\begin{remark} We make the following remarks about Algorithm~\ref{alg.ada_cons}.
\begin{itemize}[leftmargin=0.75cm]

\item It is worth noting that the ideal choice of values for $\bar{\kappa}_i$ and $\underline{\kappa}_i$ can be problem-specific and depends on the network structure. 
For instance, preserving connectivity might be crucial in some cases, while in others, optimizing for low communication overhead may take precedence. Broadly speaking, a higher value of $\bar{\kappa}_i$ results in  aggressive pruning more suited to graphs with high edge density. Conversely, $\underline{\kappa}_i$ acts as a lower bound on the edges to be retained post pruning, and a higher value of $\underline{\kappa}_i$ corresponds to a more conservative pruning approach, which is beneficial if maintaining connectivity is important. For $\beta$, lower values lead to increased randomness in edge selection, resembling approaches such as the gossip protocol \cite{randomgossip}, while higher values promote a more deterministic and greedy approach to edge selection. 

\item If directed edges are permitted in the output of the pruning protocol, the application of the push-sum protocol \cite{pushsum} offers an alternative to simple distributed averaging that 
alleviates the requirement for doubly stochastic mixing matrices.
\end{itemize}
\end{remark}

\subsection{Convergence Analysis}

To provide convergence guarantees, we begin by writing the key step of \texttt{AC} (Line 7, Algorithm~\ref{alg.ada_cons}) in matrix form by employing the stacked vector notation,  
\begin{equation}\label{ac-algo}
    \textbf{x}_{k+1} = \textbf{Q}_{k} \textbf{x}_{k}, 
\end{equation}
where $\textbf{Q}_{k} = Q_k\otimes I_d= Q_{k|\tau}\otimes I_d \in \mathbb{R}^{nd \times nd}$, where $Q_{k|\tau}  := [q_{ij}[k|\tau]]_{i\in [n],j\in [n]} \in \mathbb{R}^{n \times n}$ denotes the mixing matrix of the pruned graph $\G(\V,\bar{\EB}_{k|\tau})$ for the $k|\tau$ cycle. We use  $\Q[r:s] \in \mathbb{R}^{nd \times nd}$ to denote the product of $s-r$ consecutive matrices indexed by $\{\Q_k\}_{k=r}^{s-1}$, i.e., $\textbf{Q}[r:s] := \Q_{s-1}\times\cdots \times \Q_{r}$, 
with the convention that $Q[s:s]:=I_n\otimes I_d \in \mathbb{R}^{nd \times nd}$. Using the above notation, we can express $\x_{(k+1)\tau}$ for any $k\geq 0$ in terms of $\x_0$ as follows
\begin{equation}\label{ac-algo-1}
\begin{aligned}
\textbf{x}_{(k+1)\tau} =\textbf{Q}_{k|\tau}^{\tau}\textbf{x}_{k\tau}&=   \Q[k\tau : (k+1)\tau]\textbf{x}_{k\tau} \\&= \Q[k\tau : (k+1)\tau] \times \cdots \times \Q[0 : \tau]\textbf{x}_{0}.
\end{aligned}
\end{equation}
We establish convergence under the following assumption.
\begin{assumption}[$\ta$\textit{-Connectivity}]\label{asmp2}  There exists a constant $\ta \in \mathbb{N}$, such that for all $k \in \mathcal{I}_{\ta}:= \{\ta,\ta+\tau, \ta+2\tau\cdots\}\subset \mathcal{I}$, the graph $\G(\V,\bar{\EB}_{(k|\tau -\ta+1)}) \cup \cdots \cup \G(\V,\bar{\EB}_{k|\tau})$ is connected.
\end{assumption}
\begin{remark}
Assumption~\ref{asmp2} plays a key role in the analysis. 
In words, it implies the existence of a constant $\ta$, such that within $\ta$ pruning cycles, the union of the resulting undirected (directed) pruned graphs is connected (strongly connected). For the special case where the pruned graph is connected for all cycles, 
$\ta=1$. It is possible to guarantee this assumption by imposing a consensus iteration with the reference graph every $\ta$ iterations of the algorithm for some finite $\ta \in \mathbb{N}$. Additionally, it is worth noting that it suffices to assume this property only for indices $\mathcal{I}_{\ta}$ rather than for all $k \in \mathbb{N}$. Another important point to note is that the assumption can be replaced by a stochastic version which takes into account the utilization of softmax based sampling in the pruning protocol. Specifically, the assumption of connectedness can either be assumed to hold almost surely or replaced by an assumption that ensures 
a reduction in the consensus error in expectation (with respect to $\Q_k$) over a period of $\ta$ iterations.
\end{remark}

To prove convergence of the algorithm, we need to establish convergence of the following product sequence to the $\frac{1}{n}\textbf{1}_{n}\textbf{1}_{n}^T$ rank-one matrix, i.e., 
\begin{align*}
    \prod_{j=0}^k \Q[j\tau : (j+1)\tau] \to \tfrac{1}{n}\textbf{1}_{n}\textbf{1}_{n}^T, \qquad\text{ as } k\to \infty.  
\end{align*}
To show this, we use the notion of coefficient of ergodicity \cite{sen}, denoted by $\rho(Q)$ for any row-stochastic matrix $Q$, defined as,
\begin{equation}\label{rho-eq}
\rho(Q) := 1 - \min_{i_1,i_2}  \sum_{j=1}^n \min \left( q_{i_1 j},q_{i_2j}  \right).
\end{equation}
Using the coefficient of ergodicity instead of directly bounding the spectral gap offers several advantages, particularly in scenarios involving time-varying topologies. First, it allows us to clearly 
characterize 
the influence of 
different graph parameters, such as maximum node degree and diameter, on convergence. This characterization helps us establish an explicit relationship between pruning and convergence. 
Second, it allows for extensions to 
directed graphs (with push-sum protocols) where the condition of double stochasticity may not be satisfied.

There are two key properties of~\eqref{rho-eq} that will be useful in establishing convergence. The first property is that $\rho(\cdot)$  is sub-multiplicative, i.e., for any two matrices $Q_1,\,Q_2$, 
\begin{equation}\label{cross}
\rho(Q_1Q_2) \leq \rho(Q_1) \rho(Q_2).
\end{equation}
The second property is that it can serve as an upper bound on the dissimilarity between the rows of matrix $Q$. More formally, we have (cf. \cite[Lemma 2]{ergo1}, \cite[Lemma 4]{ergo2})
\begin{equation}\label{upbound}
 \delta(Q) :=  \max_{j} \max_{i_1,i_2} |q_{i_1j} - q_{i_2j} | \leq \rho(Q),
\end{equation}
for any matrix $Q$ which is ergodic, i.e., it is row stochastic, aperiodic and irreducible (cf. \cite{ergo1} or, \cite[Chapter 8]{matbook}).  

Next, we state and prove the main theoretical result of this section.

\begin{thm}\label{thm0}
Suppose that: $(i)$ Assumptions~\ref{asmp1} and \ref{asmp2} hold, $(ii)$ the matrices $Q_k:= [q_{ij}[k]]_{i\in[n],j\in[n]}$ are doubly stochastic for all $k\geq 0$, $(iii)$ $q_{ii}[k]>0$ for all $k\geq 0$ for at least one $i \in [n]$, and, $(iv)$ if $q_{ij}[k]>0$ for any $(i,j) \in \E$ and $k\geq 0$, then $q_{ij}[k]  >q $ for some strictly positive constant $q > 0$ independent of $k$ and $(i,j)$. Then, for any $k \geq 0 $,
\begin{equation}\label{mainres}
    \|\x_k - \bar{\x}_k \| \leq n^{\frac{3}{2}} \gamma^{ \left\lfloor \frac{k}{\ta d_\G}\right\rfloor } \|\x_0-\bar{\x}_0\|,
\end{equation}
where $\gamma := \left( 1- q ^{\ta d_\G}\right) <1$ with $q<1$ and $d_\G$ is the diameter of a graph $\G(\V,\E)$ 
defined as $d_\G := \max_{u,v \in \V}\{\text{dist}(u,v)\}$, where $\text{dist}(u,v)$ denotes the shortest path distance between any two vertices $u,\,v\in \V$.
\end{thm}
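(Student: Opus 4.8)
The plan is to reduce the left-hand side of \eqref{mainres} to the coefficient of ergodicity of the full matrix product and then control that product block by block. First I would exploit double stochasticity (assumption $(ii)$): since each $Q_k$ is column stochastic the running average is invariant, $\bar{\x}_k = \bar{\x}_0$, so writing $M_k := Q[0:k]$ for the product of all per-iteration $n\times n$ matrices I have $\x_k = (M_k \otimes I_d)\x_0$ while $\bar{\x}_k = (\tfrac{1}{n}1_n 1_n^T \otimes I_d)\x_0$. Because $1_n^T(M_k - \tfrac1n 1_n 1_n^T) = 0$ and $(M_k - \tfrac1n 1_n 1_n^T)1_n = 0$, the difference annihilates the consensus direction, giving $\x_k - \bar{\x}_k = \big((M_k - \tfrac1n 1_n 1_n^T)\otimes I_d\big)(\x_0 - \bar{\x}_0)$. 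A routine entrywise estimate, using that each column of $M_k$ averages to $1/n$, yields $|[M_k]_{ij} - 1/n| \le \delta(M_k)$; combining this with \eqref{upbound}, i.e. $\delta(M_k) \le \rho(M_k)$, and Cauchy--Schwarz converts it into an operator-norm bound of the form $\|\x_k - \bar{\x}_k\| \le n^{3/2}\rho(M_k)\,\|\x_0 - \bar{\x}_0\|$ (the exact polynomial prefactor is immaterial to the argument). It then remains to show $\rho(M_k) \le \gamma^{\lfloor k/(\ta d_\G)\rfloor}$.

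The heart of the argument is to show that any product of $L := \ta d_\G$ consecutive matrices is entrywise bounded below by $q^{L}$, which by definition \eqref{rho-eq} gives $\rho \le 1 - q^{L} = \gamma$ for that block. This is where the inhomogeneous matrix product theory enters, and it is the step I expect to be the main obstacle. The idea is a time-respecting path (support-propagation) argument: a positive entry of a partial product certifies a walk from $j$ to $i$ that uses an edge of the pruned graph at each step where it moves and otherwise stays in place. Condition $(iii)$ (positive self-weights) lets a walk ``wait'' at a node without losing mass, so the supports of the partial products are monotone nondecreasing; condition $(iv)$ lower-bounds every realized transition by $q$, so any length-$L$ walk contributes at least $q^{L}$. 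The remaining combinatorial content is to verify that $L = \ta d_\G$ steps always suffice to connect every ordered pair of nodes: by Assumption~\ref{asmp2} the union of the pruned graphs over $\ta$ consecutive cycles is connected, so each of the (at most) $d_\G$ hops needed to traverse a shortest path in that union can be realized within one $\ta$-cycle window, while the self-loops pad each partial walk up to the full length $L$. Particular care is needed to align the cyclic index bookkeeping of \eqref{ac-algo-1} with the connectivity windows indexing $\mathcal{I}_{\ta}$, and to justify controlling the diameter of the pruned union graph by the reference diameter $d_\G$.

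Finally I would assemble the pieces. Partitioning $\{0,\dots,k\}$ into $\lfloor k/L\rfloor$ disjoint blocks of length $L=\ta d_\G$ (discarding the incomplete remainder, whose factor is at most $\rho \le 1$), the submultiplicativity property \eqref{cross} gives $\rho(M_k) \le \gamma^{\lfloor k/(\ta d_\G)\rfloor}$. Substituting into the Step-one estimate yields \eqref{mainres} with $\gamma = 1 - q^{\ta d_\G} < 1$. A minor point worth noting is that $\rho(\cdot)$ is submultiplicative for the row-stochastic factors regardless of the Kronecker factor $I_d$, so it suffices to track the $n\times n$ matrices $Q_k$ throughout and to reintroduce the $I_d$ factor only in the norm bound of the first step.
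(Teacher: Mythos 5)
Your proposal follows essentially the same route as the paper's proof: both reduce the consensus error to the coefficient of ergodicity of the matrix product via the bound $\delta(\cdot)\le\rho(\cdot)$ in \eqref{upbound}, establish an entrywise lower bound of $q^{\ta d_\G}$ on each block of $\ta d_\G$ consecutive factors using Assumption~\ref{asmp2} together with the reference-graph diameter, and then chain blocks through the submultiplicativity property \eqref{cross}, with the incomplete remainder contributing a factor at most one. The differences are only cosmetic (you keep the remainder inside the product and bound its $\rho$ by $1$, whereas the paper splits it off and bounds its operator norm by $1$; your entrywise column-average estimate replaces the paper's $\ell_1$-norm computation leading to the $n^{3/2}$ prefactor), so the proposal is correct to the same level of rigor as the paper's own argument.
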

\begin{proof}
We first establish the ergodicity of the product sequence $\mathbf{Q}[m\ta:(m+1)\ta] $ for any $m\geq 0$ with $\ta \in \mathbb{N}$ as in Assumption \ref{asmp2}. The stochasticity of $\mathbf{Q}[m\ta:(m+1)\ta] $ follows from that the fact that the product of  stochastic matrices is also stochastic. Furthermore, a matrix is considered irreducible if its zero/non-zero structure corresponds to a connected graph. By Assumption \ref{asmp2}, the structure of $\mathbf{Q}[m\ta:(m+1)\ta]$ also exhibits this property \cite[Section 1-C]{hadj}. Finally, an irreducible matrix is aperiodic if it has at least one self-loop which is satisfied by $\mathbf{Q}[m\ta:(m+1)\ta]$ by condition $(ii)$ in the theorem statement \cite[Section 1-C]{hadj}.

Next, we establish a useful upper bound on $\delta\left(\mathbf{Q}[0:k+1]\right)$. To do this, we consider the following decomposition of $\Q[0:k+1]$ 
\begin{align*}
&\Q[0:k+1] \\
= &\underbrace{\Q[0: \bar{k}\ta]\times \cdots\times \Q[m\bar{k}\ta: (m+1)\bar{k}\ta] \times\cdots \times\Q[ (K-1) \bar{k}\ta : K \bar{k}\ta]}_{\mathbf{Q}[0:K\bar{k}\ta]}\times 
 \Q[K\bar{k}\ta :k+1]
\end{align*}
where $K := \lfloor k/ \bar{k} \ta \rfloor$, $\bar{k}\geq 1$ is a constant to be specified later. Let $\tau' := \bar{k}\ta$. We bound $\delta\left(\mathbf{Q}[0:K\tau']\right)$ by individually bounding $\rho\left( \mathbf{Q}[m\tau':(m+1)\tau']\right)$ in the above product. 
By \eqref{rho-eq}, it follows that, 
\begin{equation}\label{rhodef}
    \rho\left( \mathbf{Q}[m\tau':(m+1)\tau']\right) =  1 - \min_{i_1,i_2}  \sum_j \min \left( q_{i_1 j}[m\tau':(m+1)\tau'],q_{i_2j}[m\tau':(m+1)\tau'] \right),
\end{equation}
where $\Q [m\tau':(m+1)\tau'] := [q_{ij}[m\tau':(m+1)\tau']]_{i,j\in [n]}$. By \eqref{rhodef}, we note that $\rho\left( \mathbf{Q}[m\tau':(m+1)\tau']\right)$ is guaranteed to satisfy $\rho\left( \mathbf{Q}[m\tau':(m+1)\tau']\right)<1$, if for every pair of rows $i_i$ and $i_2$, there exists some $j^*$ such that $q_{i_1 j^*}[m\tau':(m+1)\tau'],\, q_{i_2j^*}[m\tau':(m+1)\tau']>0$, i.e., if there is a path from some $j^*$ to both $i_1$ and $i_2$. This, in turn, is always satisfied if for some $\bar{k}>0$, $ q_{i j}[m\tau':(m+1)\tau']>0$ for every  $i,\,j \in [n]$,  i.e., all the entries are strictly positive. 

To find such a candidate $\bar{k}$, we make the following observation: $\Q[m\ta:(m+1)\ta]$ is ergodic, so there exists a path from $i$ to $j$ for every $i,\,j \in [n]$. Setting $\bar{k}=d_\G$ in the definition of $\tau'$, we have $\tau'=\bar{k}\ta = d_\G \ta$. It follows that for the matrix $\Q[m\tau':(m+1)\tau']$, $ q_{i j}[m\tau':(m+1)\tau']>0$ for all $i,\,j \in [n]$ since we can reach any node $i$ from any other node $j$ in at most $\tau' = d_\G \ta$ steps. 

For the remainder of the proof, let $\tau'=d_\G\ta$.
To lower bound $ q_{i j}[m\tau':(m+1)\tau']>0,\, m\geq0$, we note that by the definition of $q$ and Assumption \ref{asmp2}, it follows that $q^{}_{ij}[p\ta:(p+1)\ta] \geq q^{\ta}
$ for any $p\geq 0$ and any $(i,j) \in \E_{ p\ta} \cup \cdots \cup\E_{(p+1)\ta-1} $. Since $\Q[m\tau':(m+1)\tau'] = \Q[m\tau':m\tau'+\ta] \cdots \Q[m\tau'+(d_\G-1)\ta:m\tau' + d_\G\ta]$, for any $i',j'\in [n]$,
\begin{align}\label{upbound02}
    q_{i'j'} [m\tau':(m+1)\tau'] \geq q^{ \ta d_\G} .
\end{align}
By \eqref{rhodef} and \eqref{upbound02}, 
\begin{align}\label{upbound0}
    \rho\left( \mathbf{Q}[m\tau':(m+1)\tau']\right) &\leq  1- q^{ \ta d_\G} .
\end{align}
Thus, it follows that, 
\begin{align}\label{finalbd}
\delta(\Q[(0:K\ta]) &\leq  \rho\left( \mathbf{Q}[0:K\ta]\right) \nonumber\\
&\leq \rho( \mathbf{Q}[0: \ta]\cdots \mathbf{Q}[(K-1)\ta:K\ta])\nonumber\\
&\leq \rho( \mathbf{Q}[0: \ta])\times\cdots\times \rho(\mathbf{Q}[(K-1)\ta:K\ta])\nonumber\\
&\leq  \left( 1- q^{  \ta d_\G }\right)^{K},
\end{align}
where the first inequality follows by \eqref{upbound}, the second inequality by the the sub-multiplicative property of $\rho(\cdot)$ \eqref{cross}, and the final inequality follows by \eqref{upbound0}.
By \eqref{ac-algo} and \eqref{ac-algo-1}, it follows that,
\begin{align}\label{a1}
\textbf{x}_{k}  &= \Q_{k-1} \mathbf{x}_{k-1} \nonumber\\
&= \Q[K\ta+1:k]\Q[(K-1)\ta:K\ta] \times \cdots \times\Q[0:\ta] \textbf{x}_{0} \nonumber \\
&= \Q[K\ta+1:k]\Q[0:K\ta] \textbf{x}_0.
\end{align}
Multiplying both sides of \eqref{a1} by $\textbf{1}_{n}$, by the the double stochasticity of $\Q[K\ta+1:k]$ and $\Q[0:K\ta]$, it follows that,
\begin{align}\label{a2}
    \bar{\x}_{k}  =\bar{ \x}_0 &= \tfrac{1}{n} \textbf{1}_n\textbf{1}_n^k \x_0
= \tfrac{1}{n} \textbf{1}_n\textbf{1}_n^k   \Q[0:K\ta]  \textbf{x}_0. 
\end{align}
Subtracting \eqref{a2} from \eqref{a1}, 
\begin{align*}
\textbf{x}_{k}  -\bar{\x}_{k}  &=   \Q[K\ta+1:k]\Q[0:K\ta] \textbf{x}_0 -\tfrac{1}{n}  \textbf{1}_n\textbf{1}_n^k  \Q[0:K\ta]  \textbf{x}_0 \\
 &=  \Q[K\ta+1:k] 
 \left(\Q[0:K\ta] - \tfrac{1}{n}  \textbf{1}_n\textbf{1}_n^k \Q[0:K\ta]\right)  (\textbf{x}_0  -\bar{\x}_{0}),
\end{align*}
where the second equality holds due to $\Q[K\ta+1:k] \textbf{1}=\textbf{1}$  and the fact that $\textbf{A}\bar{\x}_0 = \bar{\x}_0$ for any doubly stochastic matrix $\textbf{A}$.
Taking norms of the above, it follows that,
\begin{align}\label{alteq}
\| \textbf{x}_{k}  -\bar{\x}_{k} \|  &\leq  \left\| \Q[K\ta+1:k] \right\| \left\|
 \Q[0:K\ta] - \tfrac{1}{n} \textbf{1}_n\textbf{1}_n^k \Q[0:K\ta] \right\| \|\textbf{x}_{0}-\bar{\x}_{0}\|\nonumber\\
 &\leq \sqrt{n}\left\|
 \Q[0:K\ta] - \tfrac{1}{n} \textbf{1}_n\textbf{1}_n^k \Q[0:K\ta] \right\|_1 \|\textbf{x}_{0}-\bar{\x}_{0}\|_1
\end{align}
where the first inequality is due to the Cauchy–Schwarz inequality and the second inequality follows due to the facts that $ \|A\| \leq \sqrt{n} \|A\|_1$ for any $A\in \mathbb{R}^{n\times n}$ and $ \left\| \Q[K\ta+1:k] \right\| \leq 1$. 
We have by definition of the $\ell_1$-norm for matrices,
\begin{align}
    \left\| \mathbf{Q}[0:K\ta]  - \tfrac{1}{n}\textbf{1}_n\textbf{1}^T_n \mathbf{Q}[0:K\ta] \right\|_1  &= \max_{1\leq j \leq n} \sum_{i=1}^n \left|q_{ij}[0:K\ta]  - \tfrac{1}{n}\sum_{i'=1 }^nq_{i'j}[0:K\ta]  \right|\nonumber\\
    &\leq \max_{1\leq j \leq n}  \sum_{i=1}^n  \tfrac{1}{n}\sum_{i'=1}^n \underbrace{\Big| q_{ij}[0:K\ta] -  q_{i'j}[0:K\ta]  \Big|}_{\leq \delta(\mathbf{\bar{Q}}[0:K\ta] )}\nonumber\\
    &\leq  n \delta(\mathbf{\bar{Q}}[0:K\ta] )\label{OI}\\
    &\leq  n\left( 1- q^{  d_\G \ta}\right)^{K}\label{I},
\end{align}
where the last inequality follows by \eqref{finalbd}. Combining \eqref{I} and \eqref{alteq} with $K = \lfloor k/ \bar{k} \ta \rfloor$ completes the proof.
\end{proof}

We note that the convergence rate in Theorem \ref{thm0} is primarily dependent of the diameter of the graph, $d_\G$, and the lower bound on the nonzero entries of the mixing matrix, $q$. The form of the convergence rate factor $\gamma$ confirms the empirical observation that compact graphs with shorter diameters generally fare better with pruning since multiple information pathways can potentially exist between two nodes. The dependence on $q$ can be illustrated by considering the Metropolis-Hastings scheme as described in \eqref{MH}. Let $n_{\G_{k|\tau}}$ denote the maximum node degree of graph $\G(\V,\EB_{k|\tau})$. If
 $n_{\text{max}} := \max_{k\in \mathcal{I}} n_{\G_{k|\tau}} $, denotes the maximum node degree amongst all the pruned graphs (assumed to be connected) obtained during the algorithm, then $q = \frac{1}{1+n_\text{max}}$. Since $n_{\text{max}}$ can be smaller than the maximum node degree of the underlying reference graph, $q$ can potentially be larger for \texttt{AC}. 

\begin{remark} \label{remnew}
We make the following additional remarks about Theorem \ref{thm0}.
\begin{itemize}[leftmargin=0.75cm]
    \item It should be noted that the convergence factor $\gamma$ in (\ref{mainres}) may be 
a conservative estimate in general. Nevertheless, the analysis provided here remains applicable in a broad range of scenarios, even when tighter estimates for specific cases may not hold. In particular, the extension of Theorem \ref{thm0} to a directed graph setting, where only column stochasticity is satisfied (as in the push-sum protocol), can be derived 
relatively easily. This is due to the fact that the definition of the coefficient of ergodicity and the associated bounds, e.g., \eqref{upbound}, do not necessitate a double-stochasticity assumption on the matrix $Q_k$.
    \item The assumptions on the matrix entries of $Q_k$ in Theorem \ref{thm0} are typical in ergodic matrix literature \cite{matbook} and multi-agent coordination and optimization problems \cite{survey}. For undirected graphs, the assumptions are satisfied if the weights are generated according to 
    \eqref{MH}.
    \item To understand (and quantify) 
    the impact of pruning on distributed averaging within a simplified context, let us consider a scenario where there is a total communication budget of $B$ bits, and each node utilizes $D$ bits to transmit the quantized objective variable to its neighboring nodes. The maximum number of iterations that can be executed under these settings is given by $T = \frac{B}{2D|\E|}$. Let $\sigma(Q)$ denote the spectral gap of the mixing matrix $Q$, assumed to be generated in accordance to \eqref{MH}. Under Assumption \ref{asmp1}, for $x_k$ generated via \eqref{ac-algo} with $\Q_k=Q\otimes I_d,\,\forall \,k$, 
\begin{equation}\label{pedagog1}
\|\x_{T^{}} - \bar{\x}_{T^{}} \| \leq (1- \sigma(Q))^T\|\x_0 - \bar{\x}_0\|.
\end{equation}
If we consider the same scenario with a fraction $\kappa<1$ of the edges pruned (where the pruned mixing matrix is denoted by $Q^{\text{prune}}$) and assume the pruned graph satisfies Assumption \ref{asmp1}, we have\footnote{To keep the presentation clear, we assume $T,T^{\text{prune}} \in \mathbb{N}$.},
\begin{equation}\label{pedagog2}
\|\x_{T^{\text{prune}}} - \bar{\x}_{T^{\text{prune}}} \| \leq \left(1- \sigma(Q^{\text{prune}})\right)^{T^{\text{prune}}}\|\x_0 - \bar{\x}_0\|.
\end{equation}
Since $T^{\text{prune}} = \frac{B}{2(1-\kappa)D|\E|}=\frac{T}{1-\kappa}> T$, the upper bound for the consensus error with the pruned network, where $\sigma(Q^{\text{prune}})\approx \sigma(Q^{})$, is potentially tighter since $\left(1- \sigma(Q^{\text{prune}})\right)^{T^{\text{prune}}} \lessapprox\left(1- \sigma(Q^{})\right)^T$.
In Section \ref{sec4.1} (Figure \ref{fg1}(c)), we empirically observe that $ \sigma(Q^{\text{prune}})$ for small to medium values of $\kappa$ does not significantly deviate from $\sigma(Q)$, 
suggesting that there are instances for which the inequality is likely to hold.
\end{itemize}
\end{remark}

\section{Adaptive Consensus based Decentralized Optimization}\label{sec.acgt}
In this section, we describe the proposed Adaptive Consensus based Gradient Tracking algorithm (Algorithm~\ref{alg.acgt}, \texttt{AC-GT}) 
for decentralized optimization. The problem under consideration can be expressed as,
\begin{equation}\label{mainprob0}
\begin{aligned}
\min_{\x \in \mathbb{R}^{nd}} \quad&f(\x) :=\frac{1}{n} \sum_{i=1}^n f_i(x_i)\\
\text{s.t.} \quad & \Q\x = \x,
\end{aligned}
\end{equation}
where $f:\mathbb{R}^{nd}\to \mathbb{R}$ and $\Q:=Q\otimes I_d\in \mathbb{R}^{nd\times nd}$. Under Assumption \ref{asmp1}, the constraint is equivalent to the condition that $x_i=x_j ,\,\text{for all } \,i,\,j \in [n]$, and thus problems \eqref{mainprob0} and 
\eqref{mainprob} are equivalent. We make the following assumption with 
regards to the component functions ($f_i$).
\begin{assumption}[Regularity and convexity of $f_i$]\label{asmp3}  Each $f_i$ is $L$-smooth and $\mu$-strongly convex.\end{assumption}

The general idea of \texttt{AC-GT} is to leverage the adaptive consensus protocol of the previous section and combine it with a gradient tracking algorithm \cite{DIG} in a manner that preserves the strong convergence guarantees of the latter while harnessing the communication savings of the former. The pseudo-code for the algorithm is provided in Algorithm~\ref{alg.acgt}.

\begin{algorithm}[]
\footnotesize
   \caption{\small \texttt{ADAPTIVE CONSENSUS BASED GRADIENT TRACKING (AC-GT)}\label{alg.acgt}}
    \footnotesize
    \textbf{Inputs:} Graph $\mathcal{G}(\mathcal{V},\mathcal{E})$; Cycle Length $\tau \in \mathbb{N}$; Softmax parameter $\beta \in [0,\infty]$; Thresholding factors $(\bar{\kappa}_i,\ubar{\kappa}_i) \in [0,1]^2$ for all $i\in [n]$; Step size $\alpha>0$; Initial iterates $x_{i,0} \in \mathbb{R}^d$, $y_{i,0} = \nabla f_i(x_{i,0})$ for all $i\in [n]$; Total number of iterations $T\in \mathbb{N}$. 
  \begin{algorithmic}[1]
   \footnotesize
   \FOR{$k=0,\dots,T$}
     \FOR{ all $i\in [n]\,$ in parallel} 
          \IF{$k \in \mathcal{I},$}
           \STATE Generate $\mathcal{G}(\mathcal{V},\bar{\EB}_{k|\tau}) \sim$ \texttt{PRUNING PROTOCOL}($\mathcal{G}(\mathcal{V},\mathcal{E}),x_{i,k},(\bar{\kappa}_i,\ubar{\kappa}_i),\beta)$.
           \STATE Get new weights $\bar{q}_{ij}[k|\tau] \sim $ \texttt{GENERATE WEIGHTS} ($\mathcal{G}(\mathcal{V},\bar{\EB}_{k|\tau})$).
            \STATE Generate $\mathcal{G}(\mathcal{V},\hat{\EB}_{k|\tau}) \sim$ \texttt{PRUNING PROTOCOL}($\mathcal{G}(\mathcal{V},\mathcal{E}),y_{i,k},(\bar{\kappa}_i,\ubar{\kappa}_i),\beta)$.
           \STATE Get new weights $\hat{q}_{ij}[k|\tau] \sim $ \texttt{GENERATE WEIGHTS} ($\mathcal{G}(\mathcal{V},\hat{\EB}_{k|\tau})$).
           \ENDIF
         \STATE Update estimate at node $i$ according to:
          $
          x_{i,k+1} = \sum_{j=1}^n \bar{q}_{ij}[k|\tau]\left(x_{j,k} -\alpha y_{j,k}\right)
          $.
          \STATE Update gradient estimate at node $i$ according to:
          $
          y_{i,k+1} = \sum_{j=1}^n\hat{q}_{ij}[k|\tau] y_{j,k} + \nabla  f_i(x_{i,k+1}) - \nabla f_i(x_{i,k}) 
          $.
         
       \ENDFOR     
    \ENDFOR
\end{algorithmic}
 \textbf{Output:} $x_{i,T}$ for all $i \in [n]$.
\end{algorithm}
To provide intuition for the algorithm, we review the main steps of the gradient tracking algorithm (\texttt{GTA}), as it serves as a foundational component of \texttt{AC-GT}. The main iterations of the gradient tracking algorithm can be expressed as,
\begin{align*}
    x_{i,k+1} &=  \sum_{j=1}^n q_{ij}\left( x_{j,k} -\alpha y_{j,k}\right),
\,\qquad\,   y_{i,k+1} =  \sum_{j=1}^n q_{ij}y_{j,k} +\nabla f_i(x_{i,k+1}) - \nabla f_i(x_{i,k}),
\end{align*}
where $\alpha>0$ is a constant referred to as the step size.

The underlying computational principles of \texttt{AC-GT} are similar to those of \texttt{GTA}. However, the communication structure of \texttt{AC-GT} is based on \texttt{AC}. Similar to \texttt{AC}, \texttt{AC-GT} operates in a cyclical manner. In the $k|\tau$ cycle, if $k$ belongs to the set $\mathcal{I}$, the pruning protocol is executed twice. The first instance employs the $\x$ estimates to get the pruned graph ($Q_k$) and the associated mixing matrix, which are subsequently utilized to update the $\x$ estimate,
\begin{align}
    \textbf{x}_{k+1} &= \textbf{Q}_{k} \left(  \textbf{x}_{k} -\alpha \textbf{y}_k\right),  \text{ where }\; \textbf{Q}_{k} =   \Q_{k|\tau} ,\,\,\forall k\in \left[(k|\tau)\tau,\, (k|\tau+1)\tau \right). \label{x-iter}
\end{align}
The second instance of the protocol obtains a different pruned graph ($\hat{Q}_k$) using the $\y$ estimates. The mixing matrix corresponding to this graph is then used to update the $\y$ estimate as follows,
\begin{align}
    \textbf{y}_{k+1} &= \hat{\textbf{Q}}_{k} \textbf{y}_{k} + \nabla \textbf{f}(\textbf{x}_{k+1}) - \nabla \textbf{f} (\textbf{x}_{k}), \text{ where } \; \hat{\Q}_{k}\ =    \hat{\Q}_{k|\tau},\ \,\,\forall k\in \left[(k|\tau)\tau,\, (k|\tau+1)\tau \right) . 
\label{y-iter}
\end{align}
The pruning protocol is executed twice because the dissimilarity between the $\y$ estimates is expected to be different from the dissimilarity between the $\x$ estimates. \texttt{AC-GT} employs a constant step size $\alpha>0$ which depends on both the properties of the function and the structure of the pruned network as shown in the next subsection.

\begin{remark} We make a couple of remarks about \texttt{AC-GT} (Algorithm~\ref{alg.acgt}).
\begin{itemize}[leftmargin=0.75cm]
    \item For $\tau=1$ and $\Q_k=Q\otimes I_d$ for all $k \in \mathbb{N}$, where $Q$ is the mixing matrix corresponding to the reference 
    graph, \texttt{AC-GT} reduces to a standard gradient tracking algorithm (\texttt{GTA}) \cite{DIG}.
    \item The extension of \texttt{AC-GT} to a directed graph setting is feasible 
    by leveraging the push-pull gradient algorithm \cite{pupush}. Similar to \texttt{AC}, the principles and theory of \texttt{AC-GT} for the directed graph setting can be derived from the current framework, with appropriate adjustments.
\end{itemize}
\end{remark}

\subsection{Convergence Analysis}
We provide theoretical convergence
guarantees for \texttt{AC-GT}. For simplicity, we assume that $\Q_k=\hat{\Q}_k$ for all $k \geq 0$ in \eqref{x-iter} and \eqref{y-iter} 
and note that one can derive the same results verbatim for the case where $\Q_k\neq\hat{\Q}_k$, with additional notation required. We build up to our main result through a series of technical lemmas which we state next. We begin by proving a descent relation for the consensus error $\Psi_k$, defined as,
\begin{equation}\label{eq.psi_descent}
    \Psi_k:= \begin{bmatrix}
 \x_{k} -\bar{\textbf{x}}_{k} \\
\alpha  (\y_{k} -\bar{\textbf{y}}_{k} )
\end{bmatrix}\in \mathbb{R}^{2nd}.
\end{equation}
\begin{lem}\label{lem1}
Suppose that the matrices $\Q_k$, for all $k$, are 
doubly stochastic and $\hat{\Q}_k=\Q_k$. 
For $\Psi_k$ given in \eqref{eq.psi_descent} and $\tap \in \mathbb{N}$,   
    \begin{align}
  \|\Psi_k \|^2 &\leq  \rho' \|\Psi_{k-\tap}\|^2 +b \sum_{j=k-\tap}^{k-1}\|\Psi_{j}\|^2 +  c\sum_{j=k-\tap}^{k-1} \left(f(\bar{x}_{j}) -f(x^*)\right),\quad \mbox{if } \;\; k\geq\tap, \label{rbound} \\
    \|\Psi_k \|^2 &\leq 5(1+\tap^2) \|\Psi_{0}\|^2 +b \sum_{j=0}^{k-1}\|\Psi_{j}\|^2 +  c\sum_{j=0}^{k-1} \left(f(\bar{x}_{j}) -f(x^*)\right), \quad \mbox{if }\;\;  0<k<\tap,  \label{rbound1}
    \end{align}
    where $\rho' := 2(1+\tap^2)\max_{\tap\leq j\leq t}  \left\| \Q[j-\tap :j] -  \frac{1}{n} \textbf{1}_n\textbf{1}_n^T \right\|^2$, $b:= 180\alpha^2 L^2 (1+\tap^2)\tap$, and $c:= 320n\alpha^4 L^3(1+\tap^2)\tap$.
\end{lem}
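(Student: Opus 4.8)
The plan is to convert the two coupled updates into recursions for the consensus errors $e^x_k := \x_k - \bar{\x}_k$ and $e^y_k := \y_k - \bar{\y}_k$, unroll them over a window of $\tap$ steps, and exploit the fact that contraction toward consensus is available only for the \emph{full} window product and not for a single (possibly disconnected) pruned step. Set $\mathbf{J} := \tfrac1n\textbf{1}_n\textbf{1}_n^T$; double stochasticity gives $\mathbf{J}\Q_k = \Q_k\mathbf{J} = \mathbf{J}^2 = \mathbf{J}$ and $\mathbf{J}\bar{\x}_k = \bar{\x}_k$. I would first record the gradient-tracking invariant $\bar{y}_k = \tfrac1n\sum_i\nabla f_i(x_{i,k})$ (by induction from $y_{i,0}=\nabla f_i(x_{i,0})$ and double stochasticity), and then, using $(\Q_k-\mathbf{J})\bar{\x}_k = (\Q_k-\mathbf{J})\bar{\y}_k = 0$, derive
\begin{align*}
e^x_{k+1} &= (\Q_k-\mathbf{J})e^x_k - \alpha(\Q_k-\mathbf{J})e^y_k,\\
e^y_{k+1} &= (\Q_k-\mathbf{J})e^y_k + (I-\mathbf{J})\big(\nabla\textbf{f}(\x_{k+1})-\nabla\textbf{f}(\x_k)\big).
\end{align*}

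The key algebraic device is the telescoping identity $(\Q_{k-1}-\mathbf{J})\cdots(\Q_{k-\tap}-\mathbf{J}) = \Q[k-\tap:k]-\mathbf{J}$, valid because every mixed term collapses under $\mathbf{J}\Q_j=\Q_j\mathbf{J}=\mathbf{J}$. Unrolling the recursions over $[k-\tap,k)$ and simplifying $(\Q[l+1:k]-\mathbf{J})(I-\mathbf{J}) = \Q[l+1:k]-\mathbf{J}$ gives
\begin{align*}
e^x_k &= (\Q[k-\tap:k]-\mathbf{J})e^x_{k-\tap} - \alpha\sum_{l=k-\tap}^{k-1}(\Q[l:k]-\mathbf{J})e^y_l,\\
e^y_k &= (\Q[k-\tap:k]-\mathbf{J})e^y_{k-\tap} + \sum_{l=k-\tap}^{k-1}(\Q[l+1:k]-\mathbf{J})\big(\nabla\textbf{f}(\x_{l+1})-\nabla\textbf{f}(\x_l)\big).
\end{align*}
The leading terms carry $\|\Q[k-\tap:k]-\mathbf{J}\|$, which after squaring and maximizing over the window becomes $\rho'$; every shorter product $\Q[l:k]-\mathbf{J}$ is bounded crudely by $2$ via $\|\Q[\cdot:\cdot]\|\le1$ (doubly stochastic) and $\|\mathbf{J}\|\le1$. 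For $\tap$ a suitable multiple of $d_\G\ta$, the contraction established in Theorem~\ref{thm0} makes this leading norm, and hence $\rho'$, strictly less than one.

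Next I would control the gradient differences through $L$-smoothness, $\|\nabla\textbf{f}(\x_{l+1})-\nabla\textbf{f}(\x_l)\| \le L\|\x_{l+1}-\x_l\|$, after expanding the increment as $\x_{l+1}-\x_l = (\Q_l-I)e^x_l - \alpha\Q_l e^y_l - \alpha\bar{\y}_l$. The first two pieces are absorbed into $\|\Psi_l\|$ (since $\|e^x_l\|\le\|\Psi_l\|$ and $\alpha\|e^y_l\|\le\|\Psi_l\|$), so they feed the $b$-term with its $\alpha^2L^2$ weight. The centroid term $\|\bar{\y}_l\|$ is handled by the tracking invariant: $\bar{y}_l-\nabla f(\bar{x}_l)=\tfrac1n\sum_i(\nabla f_i(x_{i,l})-\nabla f_i(\bar{x}_l))$ gives $\|\bar{y}_l-\nabla f(\bar x_l)\|\le\tfrac{L}{\sqrt n}\|e^x_l\|$, while the smooth-convex inequality $\|\nabla f(\bar x_l)\|^2\le 2L(f(\bar x_l)-f(x^*))$ converts the remainder into the suboptimality terms. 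This is exactly what produces the $f(\bar x_j)-f(x^*)$ summands and the $n\alpha^4L^3$ scaling of $c$.

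Finally I would assemble $\|\Psi_k\|^2=\|e^x_k\|^2+\alpha^2\|e^y_k\|^2$, applying Young's inequality to separate the leading block from the window block and the Cauchy--Schwarz bound $\|\sum_{l=1}^{\tap}v_l\|^2\le\tap\sum_l\|v_l\|^2$ to each window sum; the $(1+\tap^2)$ prefactors trace back to combining this window-length Cauchy--Schwarz with the drift of the centroid across the window (from $\bar x_{j+1}=\bar x_j-\alpha\bar y_j$, an accumulation of up to $\tap$ increments), and the explicit constants $180$, $320$, $5$ are just the result of collecting these bounds. The range $0<k<\tap$ is identical except that the recursions are unrolled all the way to $\x_0$, so $\Psi_0$ plays the role of $\Psi_{k-\tap}$ and the trivial matrix bound yields $5(1+\tap^2)$ in place of $\rho'$. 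I expect the main obstacle to be precisely this bookkeeping of the $\x$--$\y$ coupling under time-varying pruned matrices: since a single $\Q_k-\mathbf{J}$ need not contract, one must route the $e^y\!\to\!e^x$ feedback and the gradient-difference feedback so that each picks up a factor of $\alpha$ (respectively $\alpha^2L^2$) and lands in the $b$- and $c$-terms, leaving $\rho'$ as the only coefficient multiplying $\|\Psi_{k-\tap}\|^2$ without a small factor, and then verifying $\rho'<1$ via Theorem~\ref{thm0}.
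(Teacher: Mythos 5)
Your ingredients largely coincide with the paper's proof: the same two consensus-error recursions, the telescoping identity for products of $\Q_j - \tfrac{1}{n}\textbf{1}_n\textbf{1}_n^T$, the $L$-smoothness bound on $\nabla\textbf{f}(\x_{l+1})-\nabla\textbf{f}(\x_l)$, and the splitting of $\bar{\y}_l$ into a consensus part and a suboptimality part (the paper packages the two recursions as the block system $\Psi_k=\J_{k-1}\Psi_{k-1}+\alpha\EE_{k-1}$ and unrolls that). However, there is a genuine gap in how you route the $e^y\to e^x$ coupling, and it breaks the stated constants. In your unrolled expression $e^x_k = (\Q[k-\tap:k]-\mathbf{J})e^x_{k-\tap} - \alpha\sum_{l}(\Q[l:k]-\mathbf{J})e^y_l$, the intermediate terms carry \emph{partial-window} products $\Q[l:k]-\mathbf{J}$, which (as you note) can only be bounded by the constant $2$, and the single factor $\alpha$ is then consumed by the weighting $\alpha\|e^y_l\|\le\|\Psi_l\|$. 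After Young's inequality and Cauchy--Schwarz over the window, this term contributes roughly $20\tap\sum_l\|\Psi_l\|^2$ to $\|\Psi_k\|^2$, i.e.\ a window-sum coefficient of order $\tap$ with \emph{no} small factor. That is strictly weaker than \eqref{rbound}, whose window coefficient is $b=180\alpha^2L^2(1+\tap^2)\tap$, and it is fatal downstream: Lemma~\ref{lem4} requires $b\le\rho'/4<1/16$, which a coefficient of order $\tap$ can never satisfy no matter how small $\alpha$ is. Your closing remark that the $e^y\to e^x$ feedback should "pick up a factor of $\alpha$ and land in the $b$-term" is exactly the misstep: a bare factor of $\alpha$ is absorbed by the definition of $\Psi$, so routing this feedback into the window sum leaves $b$ large.

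The repair is to substitute the unrolled expression for $e^y_l$ back into your coupling sum. By the telescoping identity, $(\Q[l:k]-\mathbf{J})(\Q[k-\tap:l]-\mathbf{J})=\Q[k-\tap:k]-\mathbf{J}$, so every intermediate coupling collapses onto the \emph{full-window} product, and summing over $l$ yields $\tap\,(\Q[k-\tap:k]-\mathbf{J})\,\alpha e^y_{k-\tap}$ plus gradient-difference terms. This is precisely the off-diagonal block $-j\bigl(\Q[k-j:k]-\tfrac{1}{n}\textbf{1}_n\textbf{1}_n^T\bigr)$ of the paper's product $\J[k-j:k]$ in \eqref{jdef}; it is also the true origin of the $(1+\tap^2)$ prefactor (not the centroid drift, as you suggest). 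With this substitution, the accumulated coupling multiplies $\Psi_{k-\tap}$ and is absorbed into $\rho'$, where the $(1+\tap^2)$ growth is beaten by the contraction of the full product, while the only quantities left in the window sum are gradient differences, which legitimately carry the $\alpha^2L^2$ and $n\alpha^4L^3$ weights of $b$ and $c$. After that correction, your argument becomes the paper's proof.
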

\begin{proof}
We start by considering the expression $\x_{k} -\bar{\textbf{x}}_{k}$. By \eqref{x-iter} and the double stochasticity of $\Q_k$, 
\begin{align}\label{x-con}
    \x_{k} -\bar{\textbf{x}}_{k} &= \left(\Q_{k-1} - \tfrac{\textbf{1}_n\textbf{1}_n^T}{n}\right)\left(\x_{k-1} - \bar{\x}_{k-1}- \alpha (\textbf{y}_{k-1}- \bar{\textbf{y}}_{k-1}) \right). 
\end{align}
Using \eqref{y-iter}, a similar expression for $\mathbf{y}_{k} -\bar{\textbf{y}}_{k}$ is given as,
\begin{equation}\label{y-con}
\begin{aligned}
    \mathbf{y}_{k} -\bar{\textbf{y}}_{k} &=  \left(\Q_{k-1} - \tfrac{\textbf{1}_n\textbf{1}_n^T}{n}\right)\left(\textbf{y}_{k-1} - \bar{\y}_{k-1} \right)  -\left(\textbf{I}_{n}- \tfrac{\textbf{1}_n\textbf{1}_n^T}{n}\right) (\nabla \textbf{f}(\x_{k})-\nabla \textbf{f}(\x_{k-1}) ) ,
\end{aligned}
\end{equation}
where $\textbf{I}_{n} := I_n \otimes I_{d} \in \mathbb{R}^{nd\times nd }$. 
The expressions in \eqref{x-con} and \eqref{y-con} can be compactly represented in matrix form as follows,
\begin{align}\label{conerr}
    \Psi_k  &= \J_{k-1} \Psi_{k-1} + \alpha \EE_{k-1} \nonumber\\
    &= \J[k-\tap :k] \Psi_{k-\tap} + \alpha \sum_{j=1}^{\tap} \J[k-j+1:k] \EE_{k-j},
\end{align}
where
\begin{equation}
\begin{aligned}\label{not1}
\J_k:=
\begin{bmatrix}
 \Q_{k}-  \tfrac{\textbf{1}_n\textbf{1}_n^T}{n}  & -\left(\Q_{k}-  \tfrac{\textbf{1}_n\textbf{1}_n^T}{n}\right)  \\
 0 &  \Q_{k}-  \tfrac{\textbf{1}_n\textbf{1}_n^T}{n}
\end{bmatrix}, 
\EE_{k-1} := 
\begin{bmatrix}
0\\
 \Big(\textbf{I}_{n} - \tfrac{\textbf{1}_n\textbf{1}_n^T}{n} \Big) (\nabla \textbf{f}(x_{k-1}) - \nabla \textbf{f}(x_{k} ))
\end{bmatrix}
\end{aligned}
\end{equation}
and $\J[k-j :k] := \J_{k-1} \cdots \J_{k-j}$, for any $j\leq \tap\leq k$. The matrix $\J[k-j :k] $ can be expressed as,
\begin{align}\label{jdef}
    \J[k-j :k] = \begin{bmatrix}
\Q[k-j :k]- \tfrac{\textbf{1}_n\textbf{1}_n^T}{n} & - j \left(\Q[k-j :k]- \tfrac{\textbf{1}_n\textbf{1}_n^T}{n}\right)  \\
 0 &  \Q[k-j:k] - \tfrac{\textbf{1}_n\textbf{1}_n^T}{n}
\end{bmatrix}. 
\end{align}
The above equation can be derived by a straightforward induction argument using the facts that
\begin{equation*}
\begin{bmatrix}
 A_{1} & -A_{1}  \\
 0 &  A_{1}
\end{bmatrix} \times\begin{bmatrix}
 A_{2} & -A_{2}  \\
 0 &  A_{2}
\end{bmatrix} = \begin{bmatrix}
 A_1A_2 & -2A_1A_2 \\
 0 &  A_1A_2
\end{bmatrix},
\end{equation*}
and, for any two doubly stochastic matrices $\Q$ and $\Q'$, 
\begin{equation*}
 \left(\Q - \tfrac{\textbf{1}_n\textbf{1}_n^T}{n}\right) \left(\Q' - \tfrac{\textbf{1}_n\textbf{1}_n^T}{n}\right) =  \left(\Q\Q' - \tfrac{\textbf{1}_n\textbf{1}_n^T}{n}\right).
\end{equation*}
By \eqref{jdef}, it follows that 
\begin{align}\label{c-}
\| \J[k-j :k]  \|^2 &\leq (1+ j^2)\left\| \Q[k-j :k] - \tfrac{1}{n} \textbf{1}_n\textbf{1}_n^T \right\|^2,
\end{align}
and, since $\left\| \Q[k-j :k-1] - n^{-1} \textbf{1}_n\textbf{1}_n^T \right\|^2\leq 4$, 
\begin{equation}\label{c2}
    \| \J[k-j :k]\|^2  \leq 4(1+j^2)\leq 4(1+\tap^2), \quad \forall \,j<\tap.
\end{equation}
Taking the norm square of \eqref{conerr},
\begin{align}\label{conerr1}
\|\Psi_k \|^2 &= \left\|\J[k-\tap :k]\Psi_{k-\tap}+ \alpha \sum_{j=1}^{\tap} |\J[k-j+1:k] \EE_{k-j}\right\|^2 \nonumber\\
&
\leq \left(1+\tfrac{1}{4}\right)\|\J[k-\tap :k]\Psi_{k-\tap}\|^2 +5 \alpha^2\left\| \sum_{j=1}^{\tap}\J[k-j+1:k] \EE_{k-j}\right\|^2 \nonumber\\
&\leq \tfrac{5}{4}(1+\tap^2) \left\| \Q[k-\tap :k] - \tfrac{1}{n} \textbf{1}_n\textbf{1}_n^T \right\|^2 \|\Psi_{k-\tap}\|^2 + 20\alpha^2 (1+\tap^2) \tap  \sum_{j=1}^{\tap} \|\EE_{k-j}\|^2, 
\end{align}
where the first inequality is due to the fact that $\| a+b\|^2 \leq (1+\xi)\|a\|^2 + (1+\xi^{-1})\|b\|^2 $ for any constant $\xi>0$, and the second inequality follows by \eqref{c-} with $j=\tap$, \eqref{c2}, and the fact that $\left\|\sum_{j=1}^{\tap} a_j\right\|^2\leq \tap \sum_{j=1}^{\tap}\|a_j\|^2$
. 
We next bound $\|\EE_{p-1}\|$ for any $p\geq 1$. By the definition of $\EE_{k}$  \eqref{not1} with $k=p$,
\begin{equation}\label{ek}
   \|\EE_{p-1}\|^2 \leq
 \left\|\Big(\textbf{I}_n- \tfrac{\textbf{1}_n\textbf{1}_n^T}{n} \Big) (\nabla \textbf{f}(\x_{p}) - \nabla \textbf{f}(\x_{p-1} )) \right\|^2\leq \left\|\nabla \textbf{f}(\x_{p}) - \nabla \textbf{f}(\x_{p-1} )) \right\|^2.
\end{equation}
The term on the right-hand-side of \eqref{ek} can be bounded as follows
\begin{align}\label{l1}
&\|  \nabla \textbf{f}(\x_{p}) -  \nabla \textbf{f}(\x_{p-1} )\|^2 \nonumber\\
\leq&  L^2\|\x_{p} - \x_{p-1}\|^2  \nonumber\\
=&  L^2\| (\Q_{p-1} - \mathbf{I}_n ) (\x_{p-1}-\bar{\textbf{x}}_{p-1}) -\alpha \Q_{p-1}\y_{p-1})\|^2  \nonumber\\
\leq& 2L^2\|  (\Q_{p-1} - \mathbf{I}_n ) \left(\x_{p-1} - \bar{\textbf{x}}_{p-1}\right)\|^2 + 2\alpha^2 L^2\|\textbf{y}_{p-1} \|^2 \nonumber\\
\leq& 8L^2\|  \x_{p-1} - \bar{\mathbf{x}}_{p-1}\|^2 + 4\alpha^2 L^2\|\mathbf{y}_{p-1} -\bar{\mathbf{\y}}_{p-1}\|^2+4\alpha^2 L^2 \|\bar{\mathbf{\y}}_{p-1}\|^2,
\end{align}
where we have used Assumption \ref{asmp3} to get the first inequality, \eqref{x-iter} with $k=p-1$ to substitute for $\x_{p}$ and the fact that $(\Q_{p-1} - \mathbf{I}_n )\bar{\x}_{p-1} = 0$ to get the equality, and 
$\left\|  \Q_{p-1} -\mathbf{I}_n \right\| \leq 2$ to obtain the first term in the last inequality. By Assumption \ref{asmp3},
\begin{align}\label{l2}
    \|\bar{\textbf{y}}_{p-1} \|^2 =& n\|\bar{y}_{p-1}\|^2 \nonumber\\
    =&n \left\| \frac{1}{n} \sum_{i=1}^n \n f_i( x_{i,p-1}) \right\|^2 \nonumber\\
    \leq& 2n \left\| \frac{1}{n} \sum_{i=1}^n \n f_i( x_{i,p-1})  -\frac{1}{n} \sum_{i=1}^n  \n f_i (\bar{x}_{p-1}) \right\|^2 \nonumber\\
    &+ 2n \left\| \frac{1}{n} \sum_{i=1}^n \n f_i( \bar{x}_{p-1})  -\frac{1}{n} \sum_{i=1}^n  \n f_i (x^*) \right\|^2  \nonumber\\
    \leq& 2L^2\|\x_{p-1} - \bar{\textbf{x}}_{p-1}\|^2 + 4L\sum_{i=1}^n \left(f_i(\bar{x}_{p-1}) -f_i(x^*)\right).
\end{align}
Combining \eqref{ek}, \eqref{l1} and \eqref{l2}, and using the fact that 
$\alpha < 1/3L$, it follows that for any $p \geq 1$,
\begin{align}
    \| \EE_{p-1}\|^2 
    \leq& \| \nabla \textbf{f}(\x_{p}) - \nabla \textbf{f}(\x_{p-1} )\|^2 \nonumber\\
    \leq&    9L^2 \left( \|\x_{p-1} - \bar{\mathbf{x}}_{p-1}\|^2 + \alpha^2 \|\mathbf{y}_{p-1} -\bar{\mathbf{\y}}_{p-1}\|^2 \right) \label{ebound}\\
    & + 16\alpha^2L^3\sum_{i=1}^n \left(f_i(\bar{x}_{p-1}) -f_i(x^*)\right). \nonumber
\end{align}
Using \eqref{ebound} with $p=k-j+1$ to bound $\|E_{k-j}\|,\,1\leq j\leq \tap$ in \eqref{conerr1}, we get,
\begin{equation*}
\begin{aligned}
 \|\Psi_k \|^2 \leq&  2(1+\tap^2)\left\| \Q[k-\tap :k] - \tfrac{1}{n} \textbf{1}_n\textbf{1}_n^T \right\|^2 \|\Psi_{k-\tap}\|^2 \\
 & + 180\alpha^2 L^2 (1+\tap^2)\tap  \sum_{j=1}^{\tap} \|\Psi_{k-j}\|^2\\
 & 
 +  320n\alpha^4 L^3(1+\tap^2)\tap   \sum_{j=1}^{\tap} \left(f(\bar{x}_{k-j}) -f(x^*)\right), 
\end{aligned}
\end{equation*}
which proves \eqref{rbound}. To prove (\ref{rbound1}), we note that for $k<\tap$, we can write \eqref{conerr} as,
\begin{align}\label{conerr2}
    \Psi_k  &= \J_{k-1} \Psi_{k-1} + \alpha \EE_{k-1} 
    = \J[0 :k] \Psi_{0} + \alpha \sum_{j=0}^{k-1} \J[k-j:k] \EE_{j}.
\end{align}
Taking the norm square of \eqref{conerr2},
\begin{align}\label{conerr3}
    \|\Psi_k \|^2 &\leq \left(1+\tfrac{1}{4}\right)\|\J[0 :k]\Psi_{0}\|^2 +5 \alpha^2\left\| \sum_{j=0}^{k-1}\J[k-j:k] \EE_{j}\right\|^2 \nonumber\\
    &\leq 5(1+\tap^2) \|\Psi_{0}\|^2 + 20\alpha^2 (1+\tap^2) \tap  \sum_{j=0}^{k-1} \|\EE_{j}\|^2, 
\end{align}
where we have used 
$\| a+b\|^2 \leq (1+\xi)\|a\|^2 + (1+\xi^{-1})\|b\|^2 $ for any constant $\xi>0$ in the first inequality and \eqref{c2} to obtain the second inequality. The final result \eqref{rbound1} can be derived using \eqref{ebound} with $p=j+1$ for $1\leq j\leq k-1$ 
in \eqref{conerr3}.
\end{proof}

Next, we state an auxiliary lemma whose proof can be found in \cite[Lemma 4]{suh_rag}.
\begin{lem}\label{lem4}
Suppose the non-negative scalar sequences $\{a_t\}_{t\geq0}$ and $\{e_t\}_{t\geq0}$ 
satisfy the following recursive relation 
for a fixed $\tap\in \mathbb{N}$
\begin{align}\label{rel}
a_{t} &\leq \rho' a_{t-\tap} + \frac{b}{\tap} \sum_{i=t-\tap}^{t-1} a_{i}  + c  \sum_{i=t-\tap}^{t-1} e_{i} +r ,\qquad \mbox{if } \;\;t\geq \tap,\\
a_{t} &\leq \rho'' a_{0} + \frac{b}{\tap} \sum_{i=0}^{t -1} a_{i} + c  \sum_{i=0}^{t-1}  e_{i}  +r,\qquad \,\mbox{if } \;\; t<\tap ,\label{rel1}
\end{align}
where $b , \,c, \,r\,,\rho''$ are non-negative constants, 
$b \leq \rho'/4$ and $\rho'\in \left(0,1/4\right)$. Then, for any $t \in \mathbb{N}$,
\begin{align}\label{lem5_main}
a_t &\leq 20\rho''\Big(1- \frac{3\rho}{4\tap}\Big)^t a_0 + 60 c\sum_{i=0}^{t-1}   \Big(1- \frac{3\rho}{4\tap}\Big)^{t-i}e_i  +\frac{26r}{\rho}, 
\end{align}
where $\rho := 1-2\rho'$.
\end{lem}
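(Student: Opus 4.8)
The plan is to treat \eqref{rel}--\eqref{rel1} as a delayed recursion and to show that the \emph{window-maximum} of $\{a_t\}$ contracts by a fixed factor strictly less than one every $\tap$ steps; converting this block contraction into a per-step geometric decay then yields the rate $1-\tfrac{3\rho}{4\tap}$ appearing in \eqref{lem5_main}. First I would absorb the averaged term into a single max-type bound. Writing $\theta := \rho'+b$ and using $b\le \rho'/4$ together with $\rho'\in(0,\tfrac14)$ gives $\theta \le \tfrac54\rho' < \tfrac12$. Since each of the $\tap$ summands in $\tfrac{b}{\tap}\sum_{i=t-\tap}^{t-1}a_i$ is at most $\max_{t-\tap\le i\le t-1}a_i$, the recursion \eqref{rel} collapses to the max-type inequality $a_t \le \theta\,\max_{t-\tap\le i\le t-1}a_i + F_t$, with forcing $F_t := c\sum_{i=t-\tap}^{t-1}e_i + r$.

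Next I would establish the per-block contraction. Define the window-maximum $P_t := \max_{t-\tap+1\le i\le t}a_i$. The key step is an internal induction over a block $[t,t+\tap-1]$: for each index in the block, its window splits into indices preceding the block (each term bounded by $P_{t-1}$) and indices inside the block (each already bounded, by the induction hypothesis, by $\theta P_{t-1}$ plus accumulated forcing). Because $\theta<1$, every entry of the block is controlled by $\theta P_{t-1}$ plus forcing, so that $P_{t+\tap-1}\le \theta P_{t-1} + (\text{forcing})$. This is the heart of the argument: the maximum over a sliding window decays by the factor $\theta$ once every $\tap$ iterations, even though the individual recursion only contracts across the delay.

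The third step converts this block decay into the claimed per-step rate. Set $\lambda := 1-\tfrac{3\rho}{4\tap}$ with $\rho=1-2\rho'$. Bernoulli's inequality gives $\lambda^{\tap}\ge 1-\tfrac{3\rho}{4}$, while the elementary algebra $\theta\le\tfrac54\rho' \le \tfrac14+\tfrac32\rho' = 1-\tfrac{3\rho}{4}$ shows $\theta\le\lambda^{\tap}$; hence a decay of $\theta$ per block of length $\tap$ is dominated by a decay of $\lambda$ per step. Iterating the block bound from the first full window down to $t$ and summing the geometric contributions of the forcing then produces the three terms of \eqref{lem5_main}: the constant forcing $r$ accumulates to the max-recursion steady state of order $r/(1-\theta)$, which I would bound by $\tfrac{26r}{\rho}$; by linearity each $e_i$ propagates through the recursion with geometric weight, giving $60c\sum_{i=0}^{t-1}\lambda^{t-i}e_i$; and the early phase $t<\tap$, handled through \eqref{rel1}, bounds the initial window in terms of $a_0$ and contributes the leading factor $20\rho''$.

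The main obstacle will be the constant bookkeeping rather than the structural idea: carefully tracking the forcing accumulated inside the in-block induction, matching the boundary between the two regimes $t\ge\tap$ and $t<\tap$ so that the window-maximum is initialized correctly, and verifying that the geometric sums close with exactly the constants $20$, $60$, and $\tfrac{26}{\rho}$. Since a complete derivation with these explicit constants is available in \cite[Lemma 4]{suh_rag}, I would present the contraction argument above and defer the precise constant matching to that reference.
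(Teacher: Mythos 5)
The paper never proves this lemma itself: it is stated as an auxiliary result with the proof deferred entirely to \cite[Lemma 4]{suh_rag}, which is exactly what you do after your sketch, so your proposal is consistent with the paper's treatment. Moreover, your sketch of the delayed-contraction (Halanay-type) argument is sound: collapsing the averaged term to $a_t \le \theta \max_{t-\tap\le i\le t-1} a_i + F_t$ with $\theta := \rho' + b \le \tfrac{5}{4}\rho'$, and the comparison $\theta \le \tfrac14 + \tfrac32 \rho' = 1 - \tfrac{3\rho}{4} \le \bigl(1-\tfrac{3\rho}{4\tap}\bigr)^{\tap}$ via Bernoulli, are precisely the ingredients needed to turn a per-block contraction into the claimed per-step rate, with the one bookkeeping caution that the forcing must retain its within-block geometric $\theta$-weights (as your in-block induction naturally produces) rather than being crudely bounded by $\lambda$-weights per step, since the latter would introduce spurious factors of $\tap$ into the $e_i$ and $r$ terms.
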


We are ready to state and prove the main theorem.

\begin{thm}\label{thm2}
    Suppose that: $(i)$ Assumptions~\ref{asmp1} and \ref{asmp3} hold, and, $(ii)$ $\Q_k$ are doubly stochastic matrices and $\hat{\Q}_k=\Q_k$ for $k\geq 0$. Let $x_{i,k}$ denote the iterates generated via the recursions \eqref{x-iter}-\eqref{y-iter} and $\bar{x}_k:=n^{-1}\sum_{i=1}^nx_{i,k}$. Then, for all $k \geq 0$, 
    \begin{equation}
    \begin{aligned}\label{con-res}
         &\|\bar{x}_k - x^*\|^2 \\
         \leq& 
         \left(1-\tfrac{\alpha \mu}{4}\right)^k \left(\|\bar{x}_0-x^*\|^2+ \tfrac{1000L(1+\tap^2)}{\mu n\left(1-\frac{\alpha\mu}{4}\right)} \left(\|\x_0 -\bar{\x}_0\|^2+\alpha^2 \|\y_0 -\bar{\y}_0\|^2\right)  \right)
    \end{aligned}
    \end{equation}
where $\tap\in \mathbb{N}$ with  $\rho' := 2(1+\tap^2)\max_{\tap\leq t \leq k} \left\| \Q[t-\tap :t] - \frac{1}{n} \textbf{1}_n\textbf{1}_n^T \right\|^2<1/4$ and 
\begin{equation}\label{alpha_bound}
       \alpha<\min\left\{1,\tfrac{\sqrt{\rho'}}{58 L\tap^2}\right\}.
\end{equation} 
\end{thm}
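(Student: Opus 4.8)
The plan is to reduce the analysis to a coupled pair of recursions — one tracking the optimality gap of the network average $\bar{x}_k$ and one tracking the consensus error $\|\Psi_k\|^2$ — and then to resolve their mutual dependence. First I would exploit the gradient-tracking invariant. Averaging \eqref{y-iter} using the double stochasticity of $\Q_k$ together with the initialization $y_{i,0}=\nabla f_i(x_{i,0})$ shows, by induction on $k$, that $\bar{y}_k=\frac{1}{n}\sum_{i=1}^n\nabla f_i(x_{i,k})$, and averaging \eqref{x-iter} then yields the perturbed centralized step $\bar{x}_{k+1}=\bar{x}_k-\alpha\bar{y}_k$. Writing $\bar{y}_k=\nabla f(\bar{x}_k)+g_k$ with $g_k:=\frac{1}{n}\sum_{i=1}^n\big(\nabla f_i(x_{i,k})-\nabla f_i(\bar{x}_k)\big)$ and using $L$-smoothness to bound $\|g_k\|^2\le\frac{L^2}{n}\|\x_k-\bar{\x}_k\|^2$, I would combine $\mu$-strong convexity and $L$-smoothness (Assumption~\ref{asmp3}) to obtain, for $\alpha\lesssim 1/L$, a descent inequality
\begin{equation*}
\|\bar{x}_{k+1}-x^*\|^2 \le \left(1-\tfrac{\alpha\mu}{2}\right)\|\bar{x}_k-x^*\|^2 - \alpha\left(f(\bar{x}_k)-f(x^*)\right) + \tfrac{c_1\alpha L^2}{\mu n}\|\x_k-\bar{\x}_k\|^2,
\end{equation*}
with $c_1$ an absolute constant. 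Retaining the negative suboptimality term $-\alpha(f(\bar{x}_k)-f(x^*))$ is deliberate, as it will later absorb the suboptimality-driven source in the consensus bound.

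Next I would control the consensus error. Lemma~\ref{lem1} already casts $\|\Psi_k\|^2$ as a delayed recursion matching the hypotheses of Lemma~\ref{lem4} with $a_t=\|\Psi_t\|^2$, $e_t=f(\bar{x}_t)-f(x^*)$, $r=0$, $\rho''=5(1+\tap^2)$, and the $b,c$ identified there. The standing assumption $\rho'<1/4$ and the step-size bound \eqref{alpha_bound} are exactly what is needed to verify $\rho'\in(0,1/4)$ and $b\le\rho'/4$, so that Lemma~\ref{lem4} applies and gives
\begin{equation*}
\|\Psi_t\|^2 \le 20\rho''\lambda^t\|\Psi_0\|^2 + 60c\sum_{i=0}^{t-1}\lambda^{t-i}\left(f(\bar{x}_i)-f(x^*)\right), \qquad \lambda := 1-\tfrac{3\rho}{4\tap},\ \ \rho=1-2\rho'.
\end{equation*}
Since $\rho'<1/4$ forces $\rho>1/2$, the consensus factor $\lambda$ is strictly smaller than the optimization contraction factor $1-\frac{\alpha\mu}{2}$ for all admissible step sizes, with a gap of order $1/\tap$; this is the fact that makes the final constant come out right.

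Finally I would close the loop by substituting the consensus bound into the descent inequality and unrolling the latter over $k$ steps. The suboptimality-driven source hidden inside $\|\Psi_j\|^2$ carries the tiny coefficient $c$ of order $\alpha^4$ from Lemma~\ref{lem1}, and after reordering the double sum it is dominated by the retained $-\alpha(f(\bar{x}_j)-f(x^*))$ terms and can be discarded, leaving only the initial-disagreement forcing $20\rho''\lambda^j\|\Psi_0\|^2$. The crucial computational point is to resist bounding $\lambda^j\le(1-\frac{\alpha\mu}{2})^j$ crudely and instead evaluate the geometric convolution
\begin{equation*}
\sum_{j=0}^{k-1}\left(1-\tfrac{\alpha\mu}{2}\right)^{k-1-j}\lambda^j \le \frac{16\tap}{3}\left(1-\tfrac{\alpha\mu}{2}\right)^{k},
\end{equation*}
whose gap factor is of order $\tap$; combined with $\alpha\tap\lesssim 1/(L\tap)$ from \eqref{alpha_bound}, this absorbs exactly one power of $L$ and converts the naive $L^2/\mu^2$ dependence into the advertised $L/\mu$, producing the constant $\frac{1000L(1+\tap^2)}{\mu n}$ of \eqref{con-res} and the rate $(1-\frac{\alpha\mu}{4})^{k}$ (with the $1-\frac{\alpha\mu}{2}$ contraction relaxed to $1-\frac{\alpha\mu}{4}$ to absorb the residual terms). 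The main obstacle is precisely this circular dependence — the consensus error is forced by the optimality gap, which is in turn polluted by the consensus error — and the crux is to verify that the single step-size condition \eqref{alpha_bound} simultaneously (i) meets the hypotheses $\rho'\in(0,1/4)$ and $b\le\rho'/4$ of Lemma~\ref{lem4}, (ii) secures the correct sign and magnitude of the descent term, and (iii) renders the $\lambda$-to-$(1-\frac{\alpha\mu}{2})$ gap large enough to collapse the condition-number dependence down to the stated form.
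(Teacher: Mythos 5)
Your proposal is correct and follows essentially the same route as the paper's proof: the same gradient-tracking invariant $\bar{y}_t=\tfrac{1}{n}\sum_i\nabla f_i(x_{i,t})$ and averaged descent inequality with the retained $-\alpha\left(f(\bar{x}_t)-f(x^*)\right)$ term, the same invocation of Lemma~\ref{lem1} through Lemma~\ref{lem4} with $a_t=\|\Psi_t\|^2$, $e_t=f(\bar{x}_t)-f(x^*)$, $r=0$, and the same final absorption of the consensus-driven suboptimality source into the retained negative terms under the step-size condition \eqref{alpha_bound}. The only differences are bookkeeping-level: the paper sums the descent inequality against weights $w_{t+1}=(1-\tfrac{\alpha\mu}{4})^{-(t+1)}$ and uses the relation $1-\tfrac{3\rho}{4\tap}\le\left(1-\tfrac{\alpha\mu}{4}\right)^2$, whereas you evaluate the geometric convolution directly with an $O(\tap)$ gap factor (and handle the gradient-dissimilarity cross term by Young's inequality rather than function values), which is an equivalent and, if anything, slightly tighter computation yielding the same form of constant.
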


\begin{proof}
By \eqref{x-iter}, the optimization error of the average iterates for any $t\in \mathbb{N}$ is
\begin{align}\label{ll2}
\|\bar{x}_{t+1} -x^*\|^2 &= \|\bar{x}_{t} -\alpha \bar{y}_t -x^* \|^2 \nonumber\\
&=  \left\| \bar{x}_{t} - \frac{\alpha}{n} \sum_{i=1}^n\n f_i (x_{i,t}) -x^* \right\|^2\nonumber\\
&=\| \bar{x}_t -x^* \|^2 -\frac{2\alpha}{n} \left\langle   \sum_{i=1}^n\n f_i (x_{i,t}), \bar{x}_{t}-x^* \right\rangle +\alpha^2 \left\|\frac{1}{n}  \sum_{i=1}^n\n f_i (x_{i,t}) \right\|^2,
\end{align}
where 
$\bar{y}_t =  n^{-1} \sum_{i=1}^n\n f_i (x_{i,t}) $. (This can be proven by an induction argument using (\ref{y-iter}).) 
The second term in (\ref{ll2}) can be bounded as, 
\begin{align}\label{bd2-}
     &\left\langle   \sum_{i=1}^n\n f_i (x_{i,t}), \bar{x}_{t}-x^* \right\rangle\nonumber\\
     =&  \left\langle   \sum_{i=1}^n\n f_i (x_{i,t}), \bar{x}_{t}-x_{i,t} \right\rangle +  \left\langle   \sum_{i=1}^n\n f_i (x_{i,t}), x_{i,t}-x^* \right\rangle \nonumber \\
     \geq&\sum_{i=1}^n \left[ f_i(\bar{x}_t) -f_i(x_{i,t})- \tfrac{L}{2}\|\bar{x}_t -x_{i,t}\|^2 + f_i(x_{i,t}) - f_i(x^*)+ \tfrac{\mu}{2}\|x_{i,t}-x^*\|^2\right] \nonumber\\
     \geq&\sum_{i=1}^n \Big[ f_i(\bar{x}_t) -f_i(x^*) -  \tfrac{L+\mu}{2} \|\bar{x}_t -x_{i,t}\|^2 +\tfrac{\mu }{4}\left\|\bar{x}_t -x^*\right\|^2\Big],
\end{align}
where Assumption \ref{asmp3} is used in the first inequality and the bound $ \|\bar{x}_t -x^*\|^2 \leq 2 \|\bar{x}_t-x_{i,t}\|^2 + 2\|x_{i,t} -x^*\|^2$ is used to derive the last inequality. The last term in \eqref{ll2} can be bounded as,
\begin{align}\label{bd1-}
     &\left\|\frac{1}{n}  \sum_{i=1}^n\n  f_i (x_{i,t}) \right\|^2 \nonumber\\
     =&  \left\|\frac{1}{n}  \sum_{i=1}^n\n f_i (x_{i,t})-\frac{1}{n}  \sum_{i=1}^n\n f_i (\bar{x}_{t})+\frac{1}{n}  \sum_{i=1}^n\n f_i (\bar{x}_{t})-\frac{1}{n}  \sum_{i=1}^n\n f_i (x^*) \right\|^2\nonumber\\
     \leq&  \frac{2L^2}{n} \sum_{i=1}^n \| x_{i,t}-\bar{x}_{t}\|^2+ \frac{4L}{n} \sum_{i=1}^n (f_i(\bar{x}_t) -f_i(x^*)),
\end{align}
where in the second summation we have used the fact that $\|\n f_i (\bar{x}_{t})  - \n f_i (x^*) \|^2 \leq 2L (f_i(\bar{x}_t) -f_i(x^*))$ by Assumption \ref{asmp3} \cite[Theorem 2.1.5]{yuri}. Using (\ref{bd2-}) and (\ref{bd1-}) in (\ref{ll2}) along with $\alpha<1/4L$, it follows that,
\begin{align}\label{ll01}
    \left\| \bar{x}_{t+1} -  x^*\right\|^2 \leq & \left(1-\tfrac{\alpha\mu}{2}\right) \| \bar{x}_t -x^* \|^2  - \frac{\alpha}{n}\left(\sum_{i=1}^n f_i(\bar{x}_t) -f_i(x^*)\right)\nonumber\\
    & +\frac{(3L/2+\mu)\alpha}{n} \sum_{i=1}^n \|\bar{x}_t-x_{i,t}\|^2\nonumber\\
    \leq &\left(1-\tfrac{\alpha\mu}{2}\right) \| \bar{x}_t -x^* \|^2  - \frac{\alpha}{n}\left(\sum_{i=1}^n f_i(\bar{x}_t) -f_i(x^*)\right) +\tfrac{5 \alpha L}{2n} \|\Psi_t\|^2,
\end{align}
where the last inequality follows due to $ \|\bar{\textbf{x}}_t-\x_t\|^2\leq \|\Psi_t\|^2$. Let $r_t :=\|\bar{x}_t-x^*\|^2$. Multiplying both sides of \eqref{ll01} by $w_{t+1} = (1-\alpha\mu/4)^{-(t+1)}$, it follows that,
\begin{align}\label{wkineq}
    w_{t+1} r_{t+1} &\leq w_t r_t  -  w_{t+1} \alpha \left( f(\bar{x}_t) -f(x^*)\right) + w_{t+1}\tfrac{5\alpha L}{2n} \|\Psi_t\|^2, 
\end{align}
where $w_{t+1}(1-\alpha\mu/2)\leq w_t$. 

Next, we express \eqref{rbound} (and \eqref{rbound1}) in the form of \eqref{rel} (and \eqref{rel1}) with $a_t =\|\Psi_t\|^2,\,b = 180\alpha^2 L^2 (1+\tap^2)\tap^2,\,c=320n\alpha^4 L^3 (1+\tap^2)\tap$, $e_t = f(\bar{x}_{t}) - f(x^*)$ and $r=0$. By Lemma \ref{lem4}, it follows that,
\begin{equation}\label{rel0}
    \| \Psi_{t}\|^2 \leq 100(1+\tap^2)\left(1- \tfrac{3\rho}{4\tap}\right)^t  \|\Psi_{0}\|^2 + 19200n\alpha^4 L^3(1+\tap^2)\tap^2\sum_{j=0}^{t-1}   \left(1- \tfrac{3\rho}{4\tap}\right)^{t-j}e_j. 
\end{equation}
Note that the condition on the step size \eqref{alpha_bound} ensures that $b< \rho'/4$. Multiplying both sides of \eqref{rel0} by $w_{t+1}:=(1-\alpha\mu/4)^{-(t+1)} $ and summing from $t=0$ to $k-1$ 
\begin{equation}\label{intm101}
\begin{aligned}
    &\sum_{t=0}^{k-1} \left(1- \tfrac{\alpha\mu}{4}\right)^{-(t+1)}   \|\Psi_t\|^2 \\
    \leq &   100(1+\tap^2)\|\Psi_0\|^2\sum_{t=0}^{k-1} \left(1- \tfrac{\alpha\mu}{4}\right)^{-(t+1)} \left(1- \tfrac{3\rho}{4\tap}\right)^t \\
    &+    19200n \alpha^4  L^3(1+\tap^2)\tap^2 \sum_{t=0}^{k-1}\left(1- \tfrac{\alpha\mu}{4}\right)^{-(k+1)} \sum_{j=0}^{t-1} \left(1- \tfrac{3\rho}{4\tap}\right)^{t-j} e_j. 
\end{aligned}
\end{equation}
By \eqref{alpha_bound}, we have 
$\alpha \leq \tfrac{\sqrt{\rho'}}{ L\tap^2}
    \leq \tfrac{1}{2 L\tap^2}
     \leq \tfrac{\rho}{ L\tap}
     \leq  \tfrac{3\rho}{2 \mu\tap}$, 
where the second inequality is due to $\sqrt{\rho'} \leq 1/2$, the third inequality follows by $\tap\geq 1$ and $\rho= 1-2\rho' \geq 1/2$ for $\rho'<1/4$, and the last inequality is due to the fact that $\mu < L$. Thus, it follows that
\begin{equation}\label{a-r}
     \tfrac{\alpha \mu}{2} \leq   \tfrac{3\rho}{4\tap}  \implies    \tfrac{\alpha \mu}{2}(1-\tfrac{\alpha \mu}{8}) \leq   \tfrac{3\rho}{4\tap}\implies 1- \tfrac{3\rho}{4\tap} \leq (1-\tfrac{\alpha \mu}{4})^2.
\end{equation}
We use \eqref{a-r} to bound the two summations on the right-hand-side of \eqref{intm101} as follows
 \begin{equation}\label{fbd1}
    \sum_{t=0}^{k-1} \left(1-\tfrac{\alpha\mu}{4}\right)^{-(t+1)} \left(1- \tfrac{3\rho}{4\tap}\right)^t \leq \sum_{t=0}^{k-1} \left(1-\tfrac{\alpha\mu}{4}\right)^{t-1} \leq \tfrac{4w_1}{\alpha \mu}, 
\end{equation}
and 
\begin{align}\label{fbd2}
     &\sum_{t=0}^{k-1}\left(1-\tfrac{\alpha\mu}{4}\right)^{-(t+1)} \sum_{j=0}^{t-1} \left(1- \tfrac{3\rho}{4\tap}\right)^{t-j} e_j\nonumber  \\ =&\sum_{t=0}^{k-1}\sum_{j=0}^{t-1} \left(1-\tfrac{\alpha\mu}{4}\right)^{-(t+1)+j+1}  \left(1- \tfrac{3\rho}{4\tap}\right)^{t-j} w_{j+1} e_j \nonumber \\
     =& \sum_{t=0}^{k-1}\sum_{j=0}^{t-1}  \left(\tfrac{1- 3\rho/4\tap}{1-\alpha\mu/4}\right)^{t-j} w_{j+1} e_j\nonumber \\
     \leq &
     \sum_{t=0}^{k-1}\sum_{j=0}^{t-1}  \left(1-\tfrac{\alpha\mu}{4}\right)^{t-j} w_{j+1} e_j \nonumber\\
     \leq &\sum_{t=0}^{k-1}  \left(1-\tfrac{\alpha\mu}{4}\right)^{t} \sum_{t=0}^{k-1} w_{t+1} e_t \leq \tfrac{4}{\alpha \mu }\sum_{t=0}^{k-1} w_{t+1} e_t,
\end{align}
where the second inequality is due to \eqref{a-r} and the relation $\sum_{t=0}^{k-1} \sum_{j=0}^{t-1}a_{t-j}b_j $ $\leq\sum_{t=0}^{k-1} a_{t} \sum_{t=0}^{k-1} b_{t}$ for any two non-negative scalar sequences $a_t,\,b_t,\,t\in \mathbb{N}$. By \eqref{fbd1}, \eqref{fbd2} and \eqref{intm101}, it follows that, 
\begin{equation}\label{intm101*}
\begin{aligned}
    &\sum_{t=0}^{k-1} w_{t+1}   \|\Psi_t\|^2 \\
    \leq  &
    \tfrac{400w_1(1+\tap^2)}{\mu\alpha} \|\Psi_0\|^2 + \tfrac{76800 n \alpha^3L^3(1+\tap^2)\tap}{\mu}\sum_{t=0}^{k-1} w_{t+1} \left(f(\bar{x}_t) -f(x^*)\right).
\end{aligned}
\end{equation}
Finally, summing \eqref{wkineq} from $t=0$ to $k-1$, and dividing by $w_t$, it follows that, 
\begin{align*}\label{intm22}
    r_{k} \leq& \tfrac{1}{w_k}\Bigg(w_{0} r_0  + \tfrac{1000w_1(1+\tap^2)L}{n\mu} \|\Psi_0\|^2 \nonumber\\
    & 
    +  \left(\tfrac{ 192000     \alpha^4L^4(1+\tap^2)\tap}{\mu} -\alpha\right)\sum_{t=0}^{k-1} w_{t+1}\left(f(\bar{x}_t) -f(x^*)\right)  \Bigg), 
\end{align*}
where we have used \eqref{intm101*} to bound $\sum_t w_{t+1}\|\Psi_t\|^2$. To prove \eqref{con-res}, we note that $w_k^{-1}=(1-\alpha \mu)^k$ by definition, and the last term in the above inequality is non-positive since $\alpha^3\leq \frac{1}{2\times 307200         L^3\tap^3}$.
\end{proof}

Broadly, Theorem \ref{thm2} establishes the decay of the optimization error for a gradient tracking method with time inhomgeneous weight matrices. The convergence rate of the  algorithm remains linear even when using time-varying matrices, and the form of the convergence factor remains remarkably consistent. However, it is worth mentioning that this convergence factor can potentially be smaller due to the possibility of using smaller step sizes, which depend on the value of $\tap$. In this context, the constant $\tap$  determines the effect of the network on the step size via \eqref{alpha_bound}. More precisely, $\tap$ is a constant chosen to ensure that 
$(1+\tap^2)\left\| \Q[k-\tap :k] - \frac{1}{n} \textbf{1}\textbf{1}^T \right\|^2$ is less than one. This implies that for better connected graphs, i.e., smaller $\left\| \Q[k-\tap :k] - \frac{1}{n} \textbf{1}\textbf{1}^T \right\|^2$, $\tap$ can be smaller so that $\alpha$ can be larger (cf. \eqref{alpha_bound}). For time-inhomogeneous matrices satisfying Assumption~\ref{asmp2}, we can establish precise upper bounds on the value of $\tap$ using the coefficient of ergodicity (cf. Corollary \ref{thm3}).

\begin{remark}\label{gtaremark}
One 
can recover the optimal convergence rate of the \texttt{GTA} algorithm  \cite{DIG}, up to logarithmic factors, from Theorem \ref{thm2}. For \texttt{GTA}, we have $\Q_k=Q\otimes I_d$, for all $k \geq 0$.
Then, for $\tap<k$, if $\tap > \mathcal{O}\left(\frac{1}{\sigma(Q)} \log \frac{1}{\sigma(Q)}\right)$,  
    \begin{align*}
    \rho' = 2(1+\tap^2)\left\| \Q[k-\tap :k] - \frac{1}{n} \textbf{1}_n\textbf{1}_n^T \right\|^2
      &\leq  2(1+\tap^2)\prod_{j=k-\tap}^{k-1}  \left\|\Q- \frac{1}{n} \textbf{1}_n\textbf{1}_n^T\right\|^2 \\       
      &\leq 4\tap^2(1-\sigma(Q))^{2\tap} <1/4.
    \end{align*}
     which implies $\alpha = \tilde{\mathcal{O}}(\frac{\sigma^{2}(Q)}{L})$, where $\tilde{\mathcal{O}}(\cdot)$ hides logarithmic factors. Thus, from Theorem \ref{thm2} we have $\|\bar{x}_T-x^*\|^2\leq \epsilon$, if $T\geq\tilde{\mathcal{O}}\left( \frac{L}{\mu\sigma^{2}(Q)} \log \frac{1}{\epsilon}\right)$.
\end{remark}

We have the following corollary to Theorem \ref{thm2}.
\begin{cor}\label{thm3}
   Suppose that: $(i)$ Assumptions~\ref{asmp1}, \ref{asmp2} and \ref{asmp3} hold, $(ii)$ the matrices $Q_k:= [q_{ij}[k]]_{i\in[n],j\in[n]}$ are doubly stochastic and $\hat{\Q}_k=\Q_k$ for all $k\geq 0$, $(iii)$ $q_{ii}[k]>0$ for all $k\geq 0$ for at least one $i \in [n]$, and, $(iv)$ if $q_{ij}[k]>0$ for any $(i,j) \in \E$ and $k \geq 0$, then $q_{ij}[k]  >q $ for some strictly positive constant $q > 0$ independent of $k$ and $(i,j)$.  Let $\tau_\eta := \eta \ta d_\G$, where $\ta$ is defined in Assumption \ref{asmp2} and $\eta \in \mathbb{N}$ satisfies 
\begin{equation}\label{c-cond}
    \eta \geq \left\lceil \tfrac{\max\{\ln 16n^3\ta^2d^2_\G,16 \ln 4/\gamma \}}{\gamma} \right\rceil
\end{equation}
where 
$\gamma:=q^{  d_\G \ta}$. Then, if $\alpha =\mathcal{O}\left( \tfrac{1}{L\tau_\eta^{2}}\right)$, (\ref{con-res}) is satisfied for $\bar{x}_k$ generated via the recursions (\ref{x-iter})-(\ref{y-iter}).
\end{cor}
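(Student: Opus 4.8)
The plan is to apply Theorem~\ref{thm2} with the specific choice $\tap = \tau_\eta = \eta\ta d_\G$, and to verify that the two hypotheses of that theorem---namely $\rho' < 1/4$ and the step-size bound \eqref{alpha_bound}---are guaranteed by condition \eqref{c-cond} on $\eta$. Since Assumptions~\ref{asmp1}, \ref{asmp2} and \ref{asmp3}, together with the double stochasticity and positivity conditions $(ii)$--$(iv)$, already supply everything needed to invoke the ergodicity machinery of Theorem~\ref{thm0}, the only substantive work is to translate the abstract requirement $\rho' < 1/4$ into the explicit lower bound on $\eta$.

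First I would bound the quantity $\| \Q[t-\tau_\eta:t] - \tfrac{1}{n}\textbf{1}_n\textbf{1}_n^T \|$ that appears in the definition of $\rho'$. Writing $\tau' := d_\G\ta$, the window $[t-\tau_\eta, t]$ decomposes into $\eta$ consecutive sub-products each spanning $\tau'$ cycles, so that $\Q[t-\tau_\eta:t]$ is a product of $\eta$ blocks of the form $\Q[m\tau':(m+1)\tau']$. Each such block is ergodic and satisfies $\rho(\Q[m\tau':(m+1)\tau']) \leq 1 - q^{\ta d_\G} = 1 - \gamma$ by \eqref{upbound0}, exactly as in the proof of Theorem~\ref{thm0}. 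Sub-multiplicativity \eqref{cross} then gives $\rho(\Q[t-\tau_\eta:t]) \leq (1-\gamma)^\eta$, and combining $\delta \leq \rho$ from \eqref{upbound} with the norm conversions $\|A\| \leq \sqrt{n}\|A\|_1$ and $\|A - \tfrac{1}{n}\textbf{1}_n\textbf{1}_n^T\|_1 \leq n\,\delta(A)$ used in \eqref{alteq}--\eqref{I} (using also that double stochasticity makes $\tfrac{1}{n}\textbf{1}_n\textbf{1}_n^T\Q = \tfrac{1}{n}\textbf{1}_n\textbf{1}_n^T$) yields $\| \Q[t-\tau_\eta:t] - \tfrac{1}{n}\textbf{1}_n\textbf{1}_n^T \| \leq n^{3/2}(1-\gamma)^\eta$.

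Substituting this bound into $\rho' = 2(1+\tau_\eta^2)\max_t \| \cdot \|^2$ and using $1+\tau_\eta^2 \leq 2\tau_\eta^2 = 2\eta^2\ta^2 d_\G^2$ gives $\rho' \leq 4\eta^2\ta^2 d_\G^2\, n^3 (1-\gamma)^{2\eta}$. The heart of the argument---and the step I expect to be the main obstacle---is to show that \eqref{c-cond} forces $16\eta^2\ta^2 d_\G^2 n^3 (1-\gamma)^{2\eta} < 1$, i.e. $\rho' < 1/4$. Taking logarithms and using $1-\gamma \leq e^{-\gamma}$, this reduces to the requirement $2\gamma\eta > \ln(16 n^3\ta^2 d_\G^2) + 2\ln\eta$. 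The first term on the right is absorbed by $\gamma\eta \geq \ln(16 n^3\ta^2 d_\G^2)$, which is precisely the first branch of the maximum in \eqref{c-cond}. The residual requirement $\gamma\eta > 2\ln\eta$ is where the second branch $\eta \geq 16\gamma^{-1}\ln(4/\gamma)$ enters: via the elementary estimate that $\eta \geq 2a\ln a$ implies $\eta \geq a\ln\eta$ with $a = 2/\gamma$, the constant $16$ provides ample slack to cover both this logarithmic comparison and the crude bounds ($1+\tau_\eta^2 \leq 2\tau_\eta^2$ and $1-\gamma \leq e^{-\gamma}$) made along the way.

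Finally, once $\rho' < 1/4$ is established, the step-size hypothesis is immediate: since $\sqrt{\rho'} < 1/2$, the bound \eqref{alpha_bound} is satisfied by any $\alpha \leq \tfrac{\sqrt{\rho'}}{58 L\tau_\eta^2} = \mathcal{O}(1/(L\tau_\eta^2))$, so that Theorem~\ref{thm2} delivers \eqref{con-res} verbatim for the iterates generated by \eqref{x-iter}--\eqref{y-iter}. The only delicate bookkeeping beyond the logarithmic estimate is confirming that an arbitrary length-$\tau_\eta$ window admits the block decomposition above under Assumption~\ref{asmp2}; because the theorem already takes a maximum over $t$, this follows from the same connectivity argument used for the aligned products in Theorem~\ref{thm0}.
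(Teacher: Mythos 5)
Your proposal matches the paper's proof essentially step for step: the same decomposition of $\Q[t-\tau_\eta:t]$ into $\eta$ blocks of length $\ta d_\G$, the same chain $\delta \le \rho$ plus sub-multiplicativity \eqref{cross} plus the per-block bound $1-\gamma$ from \eqref{upbound0}, the same $\ell_1$-to-$\ell_2$ norm conversions yielding $\rho' \le 4n^3\ta^2 d_\G^2\,\eta^2 e^{-2\gamma\eta}$, and the same invocation of Theorem~\ref{thm2} once $\rho'<1/4$ is secured. Your handling of the logarithmic inequality (splitting $2\gamma\eta > \ln(16n^3\ta^2 d_\G^2) + 2\ln\eta$ across the two branches of the max in \eqref{c-cond}) is just a slightly reorganized version of the paper's intermediate claim and its $\tilde{\eta}$ construction, so the two proofs coincide in substance.
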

\begin{proof}
To prove the corollary, we need to show that there exists a constant $ \eta \in \mathbb{N}$ such that for $\tau_\eta =\eta\ta d_\G$, we have  $\rho' := 2(1+\tau_\eta^2) \left\| \Q[j-\tau_\eta :j] - \tfrac{1}{n} \textbf{1}_n\textbf{1}_n^T \right\|<1/4$ for any 
$\tau_\eta\leq j\leq t$ to ensure the results of Theorem \ref{thm2} hold with $\tap=\tau_\eta$. It follows that
\begin{align*}
    \delta(\Q[(j-\tau_\eta:j]) &\leq  \rho\left( \Q[(j-\tau_\eta:j]\right) \nonumber\\
    &\leq \rho(   \mathbf{Q}[j- \eta \ta d_\G:j-(\eta-1)\ta d_\G])\cdots \rho(\mathbf{Q}[(j-\ta d_\G:j])) \nonumber\\
    &\leq \left( 1- \gamma\right)^{\eta},
\end{align*}
where $\gamma:= q^{\ta d_\G}$, and the first, second and third inequalities are due to \eqref{upbound}, \eqref{cross} and \eqref{upbound0}, respectively.  
Following the same logic as in \eqref{OI}, it follows that,
\begin{align}\label{tmpI}
    \left\| \Q [j-\tau_\eta:j]  - \tfrac{1}{n} \textbf{1}_n\textbf{1}_n^T \right\|_1 &\leq  n \delta(\Q[j-\tau_\eta:j] ) 
    \leq  n\left( 1- \gamma\right)^{\eta} 
    \leq n\exp(-\gamma \eta).
\end{align} 
Consequently, this implies,
\begin{align}\label{tmp6}
    2(1+\tau_\eta^2)\left\| \Q[j-\tau_\eta:j]  - \tfrac{1}{n} \textbf{1}_n\textbf{1}_n^T\right\|^2 &\leq 2(1+\tau_\eta^2)n\left\| \Q[j-\tau_\eta:j]  - \tfrac{1}{n} \textbf{1}_n\textbf{1}_n^T\right\|^2_{1}\nonumber\\
    &\leq 2n^3 (1+\eta^2\ta^2 d^2_\G)\exp(-2\gamma \eta)\nonumber\\
    &\leq \underbrace{4 n^3 \ta^2 d^2_\G}_{:=A} \eta^2 \exp(-2\gamma \eta),
\end{align}
where the second inequality is due to \eqref{tmpI} and the last inequality follows since $\eta\ta d_\G\geq 1 $. 
We next prove the following claim for any scalars $\eta,A\geq 1$ and $0<\gamma<1$:
\begin{align}\label{claim}
    \eta^2\exp(-2\gamma \eta) < \tfrac{1}{4A} \qquad \text{if}\qquad \eta > \left\lceil\max\left\{ \tfrac{\ln 4A, 16\ln 4/\gamma}{\gamma}\right\}\right\rceil .
\end{align}
To prove the claim, we note that the assumed inequality implies $\left( 1 -\tfrac{\ln \eta }{ \gamma \eta }\right)\eta > \tfrac{\ln 4A}{2\gamma}$. 
Let $\tilde{\eta}\in \mathbb{R}$ be such that $0< \ln \tilde{\eta}/\gamma \tilde{\eta} < 1/4$. Then, for any $\eta > \tilde{\eta}$, 
\begin{equation}\label{kadash}
    \eta \geq  \tfrac{2\ln 4A}{3\gamma}.
\end{equation}
To prove the existence of a $\tilde{\eta}$ satisfying $\ln \tilde{\eta}/ \tilde{\eta} \leq \gamma/4:=\epsilon$, we consider $\tilde{\eta} = \frac{4\ln 1/\ep}{\ep},\,\ep<\frac{1}{4}$. For such a $\tilde{\eta}$, we have, $\ln \tilde{\eta}/\tilde{\eta}= \epsilon \tfrac{\ln \frac{4}{\epsilon}+ \ln \ln \frac{1}{\ep}}{4\ln 1/\epsilon}<\epsilon$. 
Combining \eqref{kadash} with $A= 4n^3\ta^2d_\G^2$ and $\eta\geq \tilde{\eta} = 16\ln  \big(4/\gamma \big)/ \gamma$ gives the lower bound on $\eta$ in \eqref{claim}. Finally, by \eqref{claim}, \eqref{tmp6} can be bounded as,  
 $2(1+\tau_\eta^2)\left\| \Q[j-\tau_\eta:j]  - \tfrac{1}{n} \textbf{1}_n\textbf{1}_n^T\right\|^2  \leq A \eta^2 \exp(-2\gamma \eta)<\tfrac{1}{4}$,
which completes the proof.
\end{proof}

The exact convergence rate of \texttt{AC-GT} can be derived from Corollary \ref{thm3}. By \eqref{con-res}, the number of iterations required to reach $\epsilon$-accuracy, denoted by $T$, is of the order of $\mathcal{O}\left(\frac{L\tau_\eta^2}{\mu}  \log \frac{1}{\epsilon}\right)$  since  $\alpha =\mathcal{O}\left( \frac{1}{L\tau_\eta^{2}}\right)$. Using \eqref{c-cond} to bound $\eta$ in $\tau_\eta = \eta\ta d_\G$, it follows
\begin{align*}
    T=\mathcal{O}\left( \tfrac{L \eta^2 \bar{\tau}^2d^2_\G}{\mu } \log \tfrac{1}{\epsilon} \right)= \tilde{\mathcal{O}}\left( \left(\tfrac{\bar{\tau}^2d^2_\G}{\gamma^2}\right) \tfrac{L}{\mu   }  \log \tfrac{1}{\epsilon}\right).
\end{align*}
where $\tilde{\mathcal{O}}(\cdot)$ hides logarithmic factors. Compared to the iteration complexity of \texttt{GTA} (see Remark \ref{gtaremark}) under the connected graph assumption, we note that the number of iterations can potentially increase by a factor of $\tilde{\mathcal{O}}\left( \frac{\bar{\tau}^2d^2_\G}{\gamma^2}\right)$. This is expected given the  weaker assumptions made, i.e., not requiring the graph to be connected at every iteration (Assumption \ref{asmp2}). Despite the increased iteration complexity, one can potentially have savings in overall communication volume for \texttt{AC-GT} (cf. Section \ref{acgt_exp}) analogous to those for \texttt{AC} (cf. Remark \ref{remnew}). 

\section{Numerical Experiments}\label{sec.num_red}
In this section, we illustrate the empirical performance of \texttt{AC} and \texttt{AC-GT} 
via two sets of experiments. The first set of experiments demonstrates the benefits of \texttt{AC} 
compared to the 
distributed averaging algorithm in achieving consensus and illustrates the effect of the parameters of the  pruning protocol on the performance of \texttt{AC}. The second set of experiments show the merits of \texttt{AC-GT} 
compared to popular methods on a linear regression problem with synthetic data 
\cite{synthetic}, and a logistic regression problem with real datasets \cite{mushroom, aust} from the UCI repository \cite{UCI}. All methods are implemented in Python, with a dedicated CPU core functioning as a node.

\subsection{Performance of \texttt{AC}}\label{sec4.1}
We first showcase the effectiveness of \texttt{AC} in achieving consensus, where the goal is for all nodes to attain the average value of the initial estimates of the nodes \cite[Section 1]{randomgossip}. The network topologies (graphs) are generated randomly using the Erd\"{o}s-R\'{e}nyi graph model \cite{renyi} and are represented as $G(n,p)$, where $n$ represents the number of nodes, and $p \in \{0.2,0.4,0.6,0.8\}$ denotes the probability with which each possible edge is independently included in the pruned graph. The performance metric used is the average consensus error, defined as $\frac{1}{|\E|}\sum_{(i,j)\in \E} \|x_i-x_j\|$, where $\E$ represents the set of all edges and $x_i \in \mathbb{R}^d$ for all $i \in [n]$ 
with $d=10$. The total communication volume is measured as the total number of vectors exchanged amongst all the nodes in the network. The initial values $\{x_{i,0}\}_{i\in [n]}$ 
at each node are generated following a standard normal distribution.

\begin{figure}
     \centering
     \begin{subfigure}{0.3\textwidth}
         \centering
         \includegraphics[width=\textwidth]{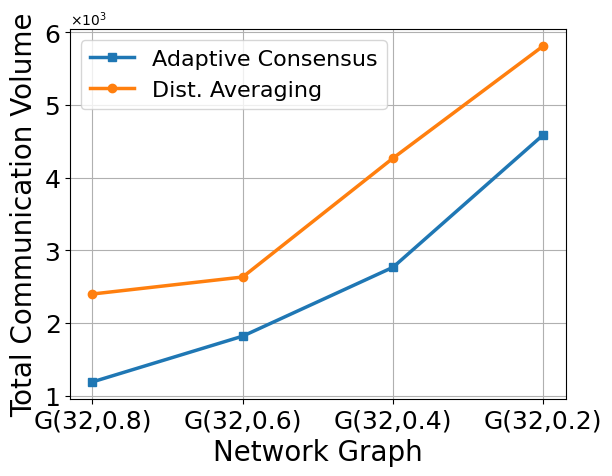}
         \caption*{\tiny{\textbf{(a)}}}
     \end{subfigure}
     \begin{subfigure}{0.3\textwidth}
         \centering
         \includegraphics[width=\textwidth]{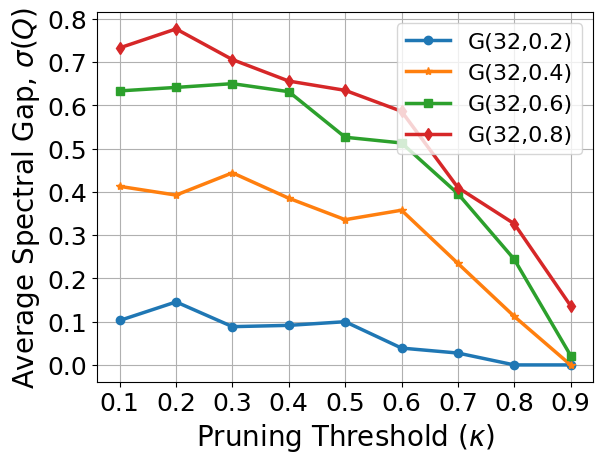}
         \caption*{\tiny{\textbf{(c)}}}
     \end{subfigure}
     \begin{subfigure}{0.3\textwidth}
         \centering
         \includegraphics[width=\textwidth]{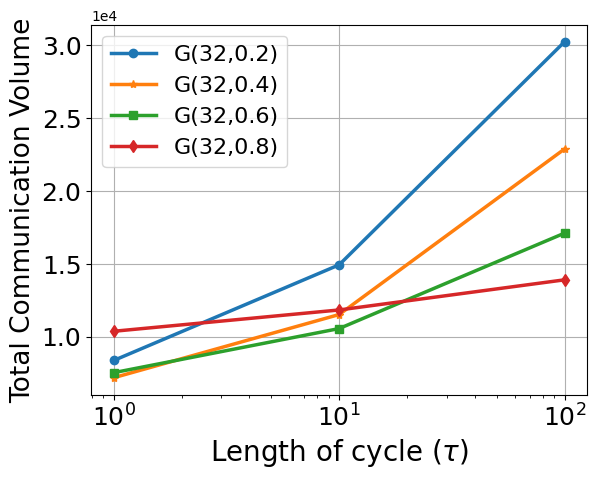}
         \caption*{\tiny{\textbf{(e)}}}
     \end{subfigure}
      \centering
     \begin{subfigure}{0.3\textwidth}
         \centering
         
         \includegraphics[width=\textwidth]{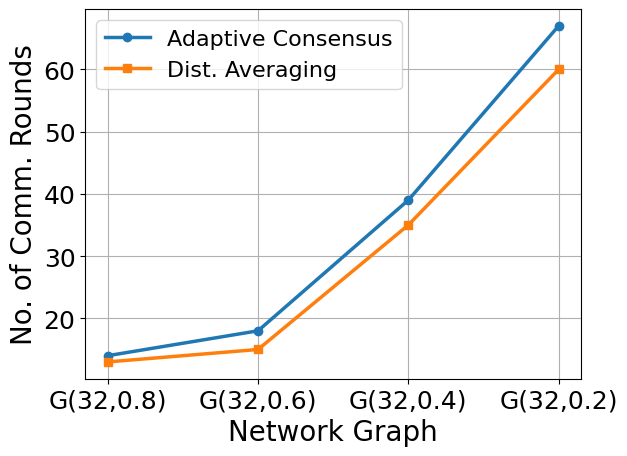}
         \caption*{\tiny{\textbf{(b)}}}
     \end{subfigure}
     \begin{subfigure}{0.3\textwidth}
         \centering
         \includegraphics[width=\textwidth]{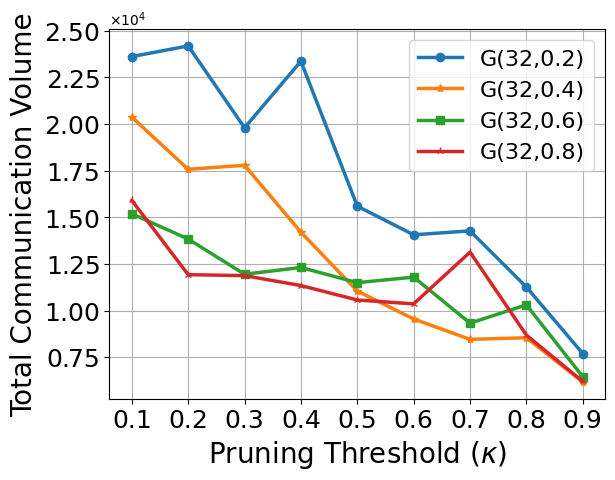}
         \caption*{\tiny{\textbf{(d)}}}
     \end{subfigure}
     \begin{subfigure}{0.3\textwidth}
         \centering
         \includegraphics[width=\textwidth]{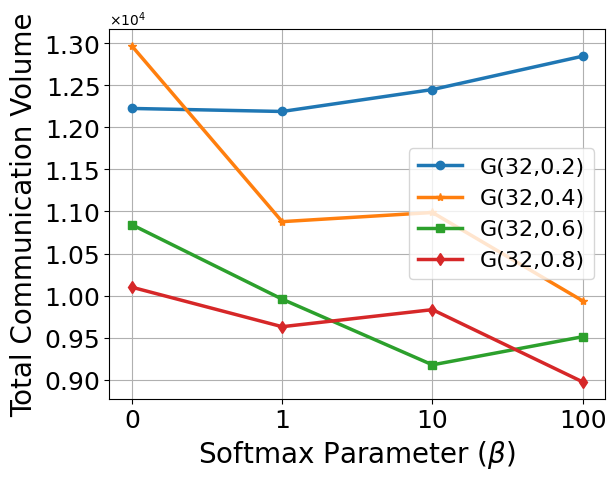}
         \caption*{\tiny{\textbf{(f)}}}
     \end{subfigure}
         
         \vspace{-0.5cm}\caption{\small{\textbf{(a)-(b)}  Total communication \textit{volume}/\textit{rounds} required to achieve a 
         consensus error of 
         $10^{-10}$. \textbf{(c)} Variation of spectral gap with respect to pruning threshold, $\kappa \in \{ 0.1,0.2,\dots,0.9\}$. \textbf{(d)-(f)} Total communication volume required to achieve a 
         consensus error of 
         $10^{-10}$ for different $\kappa \in \{ 0.1,0.2,\dots,0.9\}$, $\tau \in \{1,10^1,10^2\}$ and $\beta \in \{0,1,10^1,10^2\}$, respectively.
         }}
    \label{fg1}    
\end{figure}

\paragraph{Comparison to distributed averaging}
Figs. \ref{fg1}\textbf{(a)-(b)} compare the performance of \texttt{AC} to distributed averaging \cite{rensurvey}. The latter can be considered a specific case of \texttt{AC} with $\bar{\kappa}=0$ and  $\tau=\infty$. For the pruning protocol part of \texttt{AC}, we have set $\bar{\kappa}_i=\kappa=0.75$ for all $i \in [n]$ 
and choose 
$\ubar{\kappa}_i$ to ensure that $|\E^i|\geq 1$, so that each node has at least one neighbor. The softmax parameter is set to $\beta=1$ and the cycle length is set to $\tau=10$. The mixing matrix is generated using the Metropolis Hastings rule (cf. \eqref{MH}).
Fig. \ref{fg1}\textbf{(a)} shows a significant reduction in the total communication volume required to reach a consensus error of $10^{-10}$ as compared to distributed averaging across all graph topologies. Fig. \ref{fg1}\textbf{(b)} demonstrates that the number of communication rounds for \texttt{AC} undergoes 
only a modest increase as compared to distributed averaging.

\paragraph{Variation of pruning threshold ($\kappa$)}
In Fig. \ref{fg1}\textbf{(c)}, we plot the average spectral gap of the mixing matrices as a function of $\kappa \in \{ 0.1,0.2,\dots,0.9\}$. The average spectral gap is defined as the average of the spectral gaps of all the weight matrices obtained throughout the pruning cycles in a run of the algorithm. The plot reveals an important observation: pruning up to 50-60\% of the edges does not significantly affect the spectral properties of the mixing matrix. Moreover, increasing the value of $\kappa$ leads to a decrease in communication volume across all graphs, see 
Fig. \ref{fg1}\textbf{(d)}. 


\paragraph{Variation of consensus cycle length ($\tau$)}
Intuitively, one expects \texttt{AC} to perform better with shorter cycles 
since more frequent pruning of the graph can potentially allow \texttt{AC} to adapt more effectively to varying consensus errors. 
Fig. \ref{fg1}\textbf{(e)} confirms this intuition, 
where we consider $\tau \in  \{1, 10, 100\}$ with $\kappa = 0.75$. While a value of $\tau=1$ yields optimal performance in terms of communication volume, it necessitates executing the pruning protocol at every iteration.

\paragraph{Variation of softmax parameter ($\beta$)}
Fig. \ref{fg1}\textbf{(f)} plots the total communication volume required to achieve 
a consensus error of $10^{-10}$ as a function of the softmax parameter $\beta \in \{0,1,10,100\}$ with $\tau=10$ and $\kappa=0.75$. The total communication volume is obtained by averaging over 100 independent trials. Fig. \ref{fg1}\textbf{(f)} shows that  higher values of $\beta$ tend to show a modest improvement in the performance.

\subsection{Performance of \texttt{AC-GT}}\label{acgt_exp}
This subsection considers the evaluation of the performance of \texttt{AC-GT} on linear and logistic regression problems.

\subsubsection{Linear Regression}\label{acgt_exp_1}
We first consider a linear least-squares regression problem with synthetic data, formally defined as,
\begin{align*}
    \min_{x\in \mathbb{R}^d} \;\; f(x):=\tfrac{1}{N}\sum_{i=1}^N (a^T_ix-b_i )^2,
\end{align*}
where $a_i \in \mathbb{R}^{d}$ denotes the $i$th feature vector and $b_i\in \mathbb{R}$ denotes the corresponding label. The data is generated using the technique proposed in \cite{synthetic} with $N=32000$ and $d = 10$. 
The network topologies considered are $G(n,p)$, where $n=32$ and $p \in \{0.2,0.5,0.8\}$. The data is partitioned uniformly in a disjoint manner amongst the nodes. We tuned the step size parameter in \texttt{AC-GT} using a grid-search over the range $\alpha \in \{10^{-4},10^{-3},10^{-2},10^{-1} , 10^{0}\}$ and present the results for the best step size. The softmax parameter is  set to $\beta=1$ and the cycle length is set to $\tau=10$. The mixing matrix is generated using the Metropolis Hastings rule (cf. \eqref{MH}). 

\begin{figure}
     \centering
     \begin{subfigure}{0.3\textwidth}
         \centering
         \includegraphics[width=\textwidth]{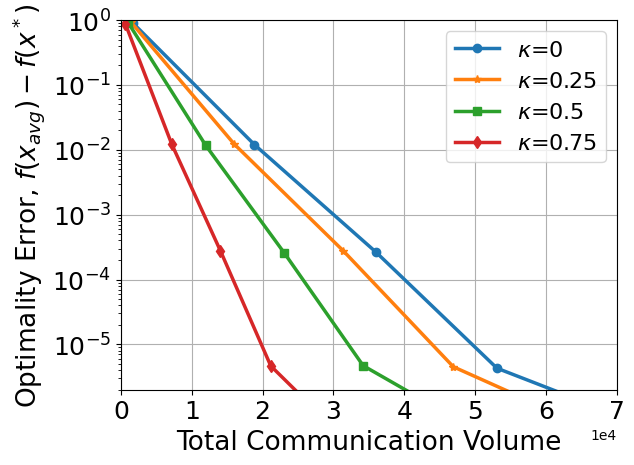}
     \end{subfigure}
     \begin{subfigure}{0.3\textwidth}
         \centering
         \includegraphics[width=\textwidth]{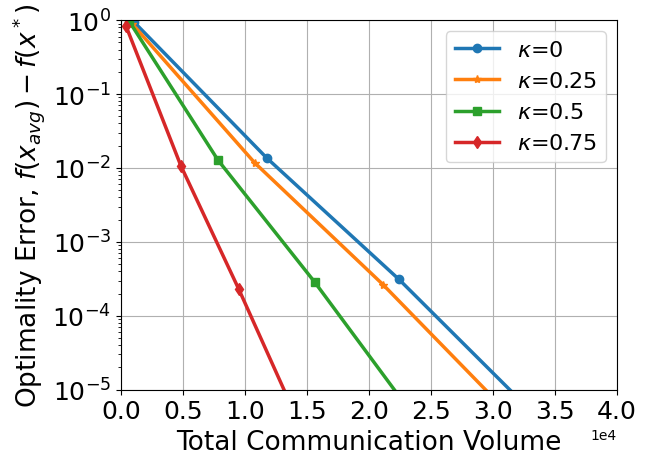}
     \end{subfigure}
     \begin{subfigure}{0.3\textwidth}
         \centering
         \includegraphics[width=\textwidth]{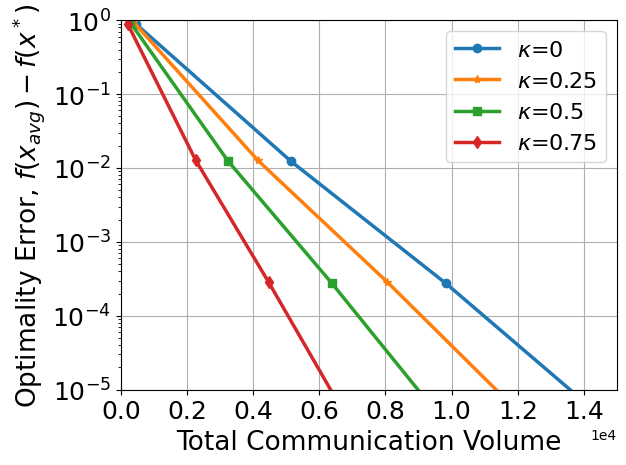}
     \end{subfigure}
     \begin{subfigure}{0.3\textwidth}
         \centering\includegraphics[width=\textwidth]{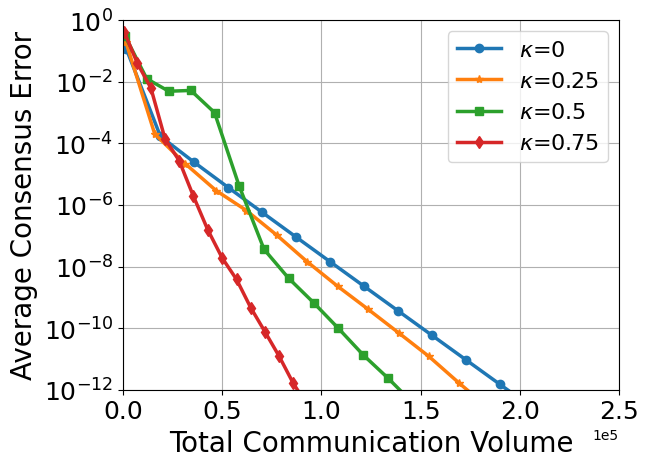}
          \caption*{\textbf{\tiny{(a)  G(32,0.8)}}}
     \end{subfigure}
     \begin{subfigure}{0.3\textwidth}
         \includegraphics[width=\textwidth]{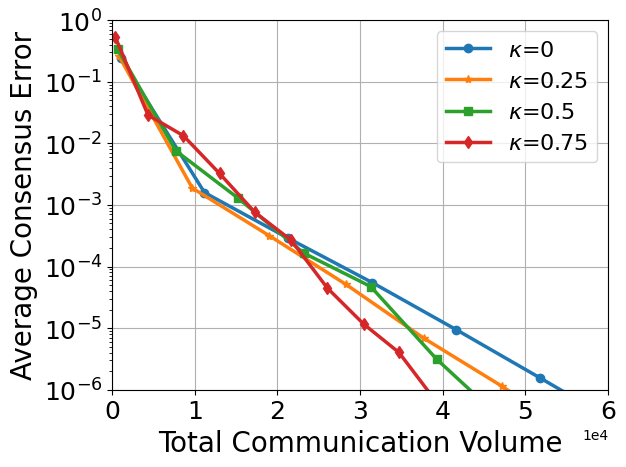}
         \caption*{\textbf{\tiny{(b)  G(32,0.5)}}}
     \end{subfigure}
     \begin{subfigure}{0.3\textwidth}
         \includegraphics[width=\textwidth]{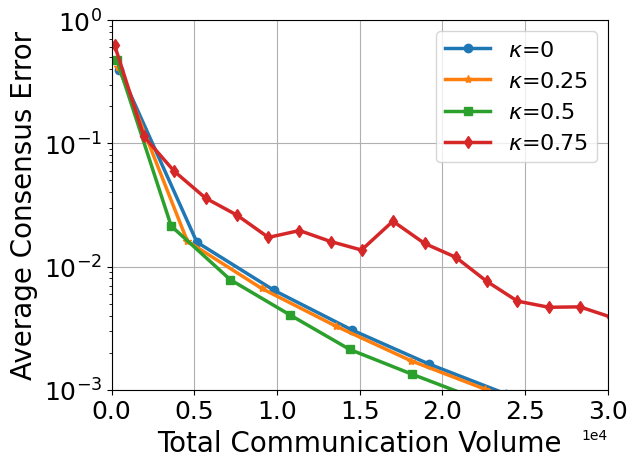}
         \caption*{\textbf{\tiny{(c)  G(32,0.2)}}}
     \end{subfigure}
         
         \vspace{-0.5cm}\caption{\small Performance of \texttt{AC-GT} on linear regression problems for three different graphs, \textbf{(a)} G(32,0.8) \textbf{(b)} G(32,0.5) \textbf{(c)} G(32,0.2). \textbf{Top}: Optimality Error versus Total Communication Volume.  \textbf{Bottom}: Average Consensus Error versus Total Communication Volume.}
    \label{fig4}
\end{figure}

Fig. \ref{fig4} illustrates the performance of \texttt{AC-GT} in terms of two metrics, optimality error, defined as $f(x_{\text{avg}}) - f(x^*)$, where $x_{\text{avg}}=\frac{1}{n}\sum_{i=1}^n x_i$, and  average consensus error described in Section \ref{sec4.1}, with respect to the total communication volume. The results suggest that, in terms of optimality error, it is preferable to use a higher value of $\kappa$, the pruning threshold. This observation is consistent across graph topologies. That said, there is a slight degradation in the decay of the consensus error as $\kappa$ increases. This degradation becomes more noticeable in sparser topologies, as seen in Fig. \ref{fig4}\textbf{(c)}.

\subsubsection{Logistic Regression}
We consider $\ell_2$-regularized logistic regression problems with real datasets of the form,
\begin{align*}
    \min_{x\in \mathbb{R}^d} \;\; f(x):=-\tfrac{1}{N}\sum_{i=1}^N \left\{ b_i\log \sigma(a_i^Tx) + (1-b_i)\log\left( 1-\sigma(a_i^Tx) \right)\right\} + \tfrac{\lambda}{2} \|x\|^2
\end{align*}
where $\{a_i,b_i\}_{i=1}^N$ represent the training samples with label $b_i\in \{0,1\}$, $\lambda > 0$ is the regularization parameter and $\sigma(z) = \tfrac{1}{1+\exp(-z)},$ $\forall z\in \mathbb{R}$ is 
the sigmoid function.

We consider the Statlog \cite{aust} and the Mushroom \cite{mushroom} datasets from the UCI repository \cite{UCI}. The Statlog dataset consists of $N=690$ samples and $d=14$ features whereas the Mushroom dataset consists of $N=8124$ samples and $d=22$ features. For these experiments, we consider $G(n,p)$ with $n=16$ and $p=0.5$.  The data partition and the algorithm parameters for \texttt{AC-GT} are set in the same manner as Section \ref{acgt_exp_1}. The step size is tuned using a grid-search over the range $\alpha \in  \{10^{-4},10^{-3},10^{-3},10^{-1},10^{0}\}$ for all the algorithms. The regularization parameter is set to $\lambda =10^{-4}$. The optimal solution $x^*$ is computed using the L-BFGS algorithm from the SciPy library in Python and solving the problems to high accuracy.

\begin{figure}
     \centering
     \begin{subfigure}{0.3\textwidth}
         \centering
         \includegraphics[width=\textwidth]{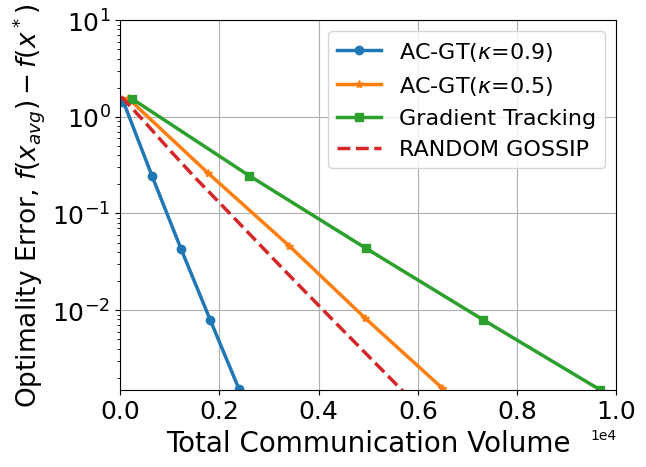}
     \end{subfigure}
     \begin{subfigure}{0.3\textwidth}
         \centering
         \includegraphics[width=\textwidth]{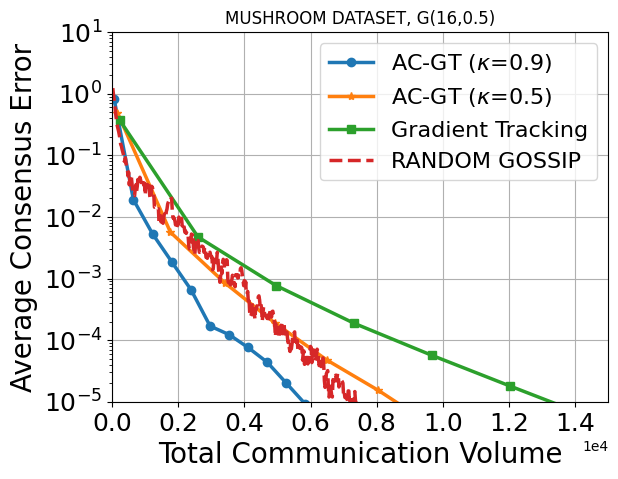}
     \end{subfigure}
     \begin{subfigure}{0.3\textwidth}
         \centering
         \includegraphics[width=\textwidth]{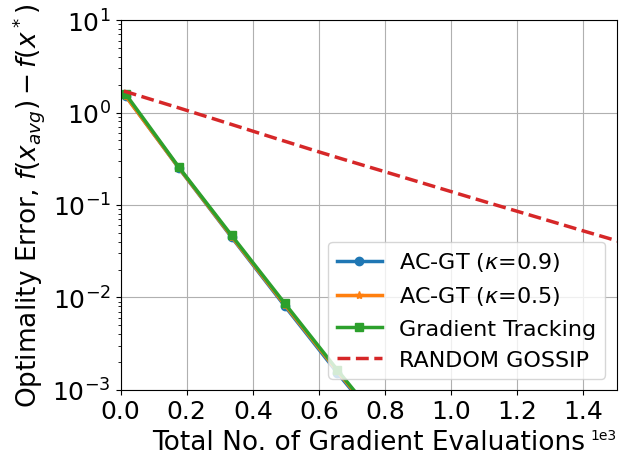}
     \end{subfigure}
      \centering
     \begin{subfigure}{0.3\textwidth}
         \centering
         \includegraphics[width=\textwidth]{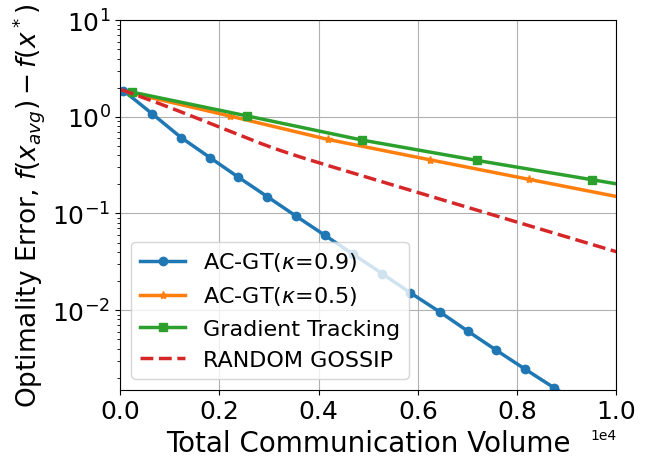}
          \caption*{\tiny{(a)}}
     \end{subfigure}
     \begin{subfigure}{0.3\textwidth}
         \centering
         \includegraphics[width=\textwidth]{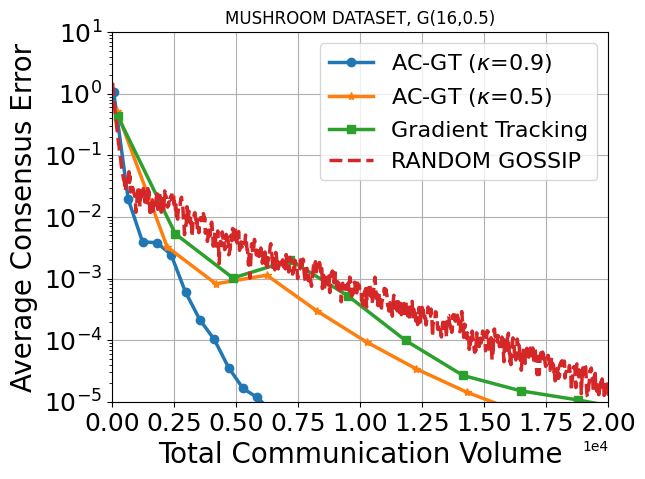}
         \caption*{\tiny{(b)}}
     \end{subfigure}
     \begin{subfigure}{0.3\textwidth}
         \centering
         \includegraphics[width=\textwidth]{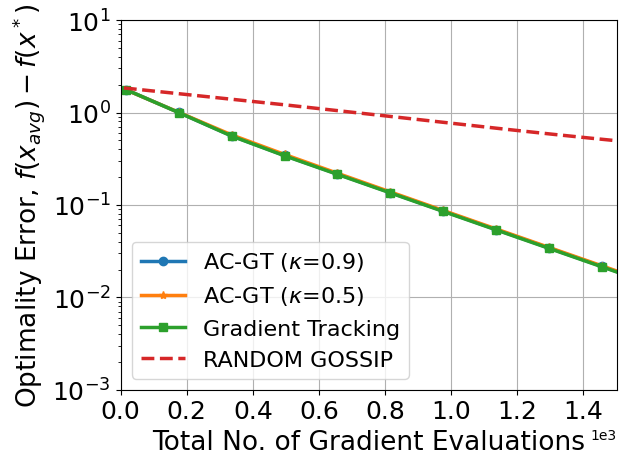}
         \caption*{\tiny{(c)}}
     \end{subfigure}
           \vspace{-0.5cm}\caption{Performance of \texttt{AC-GT} on logistic regression problems: \textbf{(a)} Optimality Error versus Total Communication Volume \textbf{(b)} Consensus Error versus Total Communication Volume \textbf{(c)} Optimality Error versus Total number of Gradient Evaluations. \textbf{Top:} Statlog Dataset, G(16,0.5). \textbf{Bottom:} Mushroom Dataset, G(16,0.5).}
    \label{fig6}
\end{figure}

The performance of \texttt{AC-GT} is compared to EXTRA  \cite{EXTRA} a popular gradient tracking algorithm (denoted by ``Gradient Tracking'' in the plots) and the random gossip algorithm \cite{randomgossip}\footnote{To solve the semi-definite problem required for implementing the random gossip algorithm from \cite{randomgossip}, we utilize the CVXPY library \cite{cvxpy}.}. In addition to the previous metrics, we also report the optimality error versus the total number of gradient evaluations of $f(\cdot)$. From the optimality error plots shown in Figs. \ref{fig6}\textbf{(a)} and \textbf{(c)}, it is evident that \texttt{AC-GT} with a parameter value of $\kappa=0.9$ exhibits the best performance. While the optimality error of random gossip is comparable to \texttt{AC-GT} with $\kappa=0.5$ in terms of total communication volume, \texttt{AC-GT} outperforms the former with respect to total gradient evaluations. As for the consensus error, there is no notable difference in algorithm performance for the Statlog dataset. However, for the Mushroom dataset, random gossip and gradient tracking appear to exhibit inferior performance. 

\section{Conclusion}\label{sec.conc}

In this paper, we have developed an adaptive randomized algorithmic framework aimed at enhancing the communication efficiency of decentralized algorithms. Based on this framework, we have proposed the \texttt{AC} algorithm to solve the consensus problem and the \texttt{AC-GT} algorithm to solve the decentralized optimization problem. The distinguishing feature of the framework is the ability to reduce the volume of communication by making use of the inherent network structure and local information. We have established theoretical convergence guarantees and have analyzed the impact of various algorithmic parameters on the performance of the algorithms. Numerical results on the consensus problem, and linear and logisitc regression problems, demonstrate that proposed algorithms achieve significant communication savings as compared to existing methodologies.

Finally, several interesting extensions of the proposed algorithmic framework can be considered. From a communication perspective, one could consider directed graphs. Most of the groundwork for this setting has already been laid out in this work and as mentioned earlier, the theory can be extended to accommodate push-pull gradient methods \cite{pupush}, where either row or column stochasticity is satisfied. Additionally, asynchronous updating within each consensus cycle can also be incorporated to alleviate the constraints imposed by slower (straggler) nodes. Other interesting directions include nonconvex problems,  stochastic local information and inexact communication.

\newpage
\bibliographystyle{plain}
{\small
\bibliography{ref}}


\end{document}